\DeclareSymbolFontAlphabet{\mathbb}{AMSb}
\DeclareSymbolFontAlphabet{\mathbbol}{bbold}
\theoremstyle{plain}
\newtheorem*{theorem*}{\normalfont\scshape Theorem}
\newtheorem{proposition}{\normalfont\scshape Proposition}[section]
\newtheorem{lemma}[proposition]{\normalfont\scshape Lemma}
\newtheorem*{corollary*}{\normalfont\scshape Corollary}
\theoremstyle{remark}
\newtheorem*{remark*}{\normalfont\scshape Remark}
\numberwithin{equation}{section}
\renewcommand{\footnoterule}{
  \kern -3pt
  \hrule width 2.5in height 0.4pt
  \kern 3pt
}
\begin{document}
	
\title[ On a sum of the error term of the Dirichlet divisor function over primes ]
	  { On a sum of the error term of the Dirichlet divisor function over primes }

\author[Zhen Guo, Xin Li]{Zhen Guo \quad \& \quad Xin Li}

\address{Department of Mathematics, China University of Mining and Technology,
         Beijing, 100083, People's Republic of China}

\email{zhen.guo.math@gmail.com}


\address{Department of Mathematics, China University of Mining and Technology,
         Beijing, 100083, People's Republic of China}

\email{lixin\_alice@foxmail.com}

\date{}

\footnotetext[1]{Zhen Guo is the corresponding author. \\
\quad\,\,
{\textbf{Keywords}}: Divisor function; exponential sum; double large sieve. \\

\quad\,\,
{\textbf{MR(2020) Subject Classification}}: 11L07, 11L20, 11N37.

}

\begin{abstract}
 Let $d(n)$ be the Dirichlet divisor function and $\Delta(x)$ denote the error term of the sum $\sum_{n\leqslant x}d(n)$ for a large real variable $x$. In this paper we focus on the sum $\sum_{p\leqslant x}\Delta^2(p)$, where $p$ runs over primes. We prove that there exists an asymptotic formula.
\end{abstract}

\maketitle

\section{Introduction and Main Result}
 Let $d(n)$ be the Dirichlet divisor function and $\Delta(x)$ denote the error term of the sum $\sum_{n\leqslant x}d(n)$ for a large real variable $x$. It was first proved by Dirichlet that $\Delta(x)=O(x^{1/2})$. Over the years the exponent $1/2$ was improved by many authors\cite{MR1199067,MR0644943,MR1512430,MR1509041}. Until now the best result
\begin{equation}
  \Delta(x)\ll x^{131/416}(\log x)^{26497/8320}
\end{equation}
was given by Huxley\cite{MR2005876}.

It is conjectured that $\Delta(x)\ll x^{1/4+\varepsilon}$, which is supported by the classical mean square result
\begin{equation}\label{mean square}
  \int_{1}^{T}\Delta^2(x)dx=\frac{\mathcal{C}}{6\pi^2}T^{3/2}+O(T^{5/4+\varepsilon})
\end{equation}
for a large real number $T$, and where 
\begin{equation*}
  \mathcal{C}=\sum_{n=1}^{\infty}\frac{d^2(n)}{n^{3/2}}
\end{equation*}
is a constant. It was proved by Cram\'{e}r\cite{MR1544568} in 1922. The result was improved by Tong\cite{MR0098718} in 1956, Preissmann\cite{MR0930552} in 1988, and Lau and Tsang\cite{MR2475967} in 2009.

Also, the discrete mean values
\begin{equation*}
  \mathcal{D}_2(x):=\sum_{n\leqslant x}\Delta^2(n)
\end{equation*}
have been investigated by various authors. Define the continuous mean values
\begin{equation*}
  \mathcal{C}_2(x):=\int_{1}^{x}\Delta^2(t)dt.
\end{equation*}
Many authors found that the discrete and the continuous mean value formulas are connected deeply with each other and studied the difference between $\mathcal{D}_2(x)$ and $\mathcal{C}_2(x)$.  Hardy\cite{MR1576556} studied the difference between $\mathcal{D}_2(x)$ and $\mathcal{C}_2(x)$ and derived that $\mathcal{D}_2(x)=O(x^{3/2+\varepsilon})$. These differences were also studied by Furuya \cite{MR2176479}, he gave the asymptotic formula
\begin{equation*}
  \mathcal{D}_2(x)=\mathcal{C}_2(x)+\frac{1}{6}x\log^2x+\frac{8\gamma-1}{12}x\log x+\frac{8\gamma^2-2\gamma+1}{12}x+\left\{
  \begin{array}{lr}
    O \\
    \Omega_\pm
  \end{array}
  \right\}(x^{3/4}\log x).
\end{equation*}

In this paper, as an analogue of the discrete mean values, we consider a sum of $\Delta^2(\cdot)$ over a subset of $\mathbb{N}$. More precisely over primes, namely $\sum_{p\leqslant x}\Delta^2(p)$, where $p$ runs over primes and $x$ is a large real variable. And the result is stated as follows:
\begin{theorem*}
Let $p$ be prime and $x\ge2$ is a large real number, then 
\begin{equation}
  \sum_{p\leqslant x}\Delta^2(p)=\frac{\mathcal{C}}{4\pi^2}\sum_{p\leqslant x}p^{1/2}+O(x^{23/16+\varepsilon})
\end{equation}
holds for any $\varepsilon>0$, where $\mathcal{C}$ is defined in (\ref{mean square}).
\end{theorem*}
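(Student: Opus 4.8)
The starting point is the truncated Voronoï formula: for $1\le N\le y$,
\[
 \Delta(y)=\frac{y^{1/4}}{\pi\sqrt2}\sum_{n\le N}\frac{d(n)}{n^{3/4}}\cos\!\Bigl(4\pi\sqrt{ny}-\tfrac{\pi}{4}\Bigr)+O\bigl(y^{1/2+\varepsilon}N^{-1/2}\bigr)=:M_N(y)+E_N(y).
\]
The plan is to take $y=p$, square, and sum over $p\le x$, keeping $N=N(x)$ free until the end. Writing $\sum_{n\le N}d(n)n^{-3/4}\cos(4\pi\sqrt{np}-\tfrac\pi4)=\operatorname{Re}\bigl(e^{-i\pi/4}Z_N(p)\bigr)$ with $Z_N(p)=\sum_{n\le N}d(n)n^{-3/4}e(2\sqrt{np})$, and using $(\operatorname{Re}w)^2=\tfrac12|w|^2+\tfrac12\operatorname{Re}(w^2)$, one gets
\[
 M_N(p)^2=\frac{p^{1/2}}{4\pi^2}\Bigl(\sum_{n\le N}\frac{d^2(n)}{n^{3/2}}+R_-(p)+R_+(p)\Bigr),
\]
where $R_\pm(p)$ collects the terms carrying the phases $e\bigl(2\sqrt p(\sqrt n\pm\sqrt m)\bigr)$, the diagonal $n=m$ being excluded from $R_-$. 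The cross term $2\sum_{p\le x}M_N(p)E_N(p)$ and the sum $\sum_{p\le x}E_N(p)^2$ are controlled by Cauchy--Schwarz together with Hardy's bound $\sum_{p\le x}\Delta^2(p)\le\mathcal D_2(x)\ll x^{3/2+\varepsilon}$ and the pointwise estimate $E_N(p)\ll p^{1/2+\varepsilon}N^{-1/2}$; for $N$ a suitable power of $x$ they contribute $\ll x^{7/4+\varepsilon}N^{-1/2}+x^{2+\varepsilon}N^{-1}$.

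The diagonal term produces the main term: multiplying $\frac1{4\pi^2}\sum_{p\le x}p^{1/2}$ by $\sum_{n\le N}d^2(n)n^{-3/2}$ and completing the $n$-sum to infinity, which costs $O\bigl(x^{3/2}N^{-1/2+\varepsilon}\bigr)$ because $\sum_{n>N}d^2(n)n^{-3/2}\ll N^{-1/2+\varepsilon}$, yields exactly $\frac{\mathcal C}{4\pi^2}\sum_{p\le x}p^{1/2}$, with the constant as stated.

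Everything thus reduces to bounding the oscillatory double sums $\sum_{p\le x}p^{1/2}R_\pm(p)$, i.e.\ sums of the shape
\[
 \mathcal S_{\pm}=\sum_{p\le x}p^{1/2}\sum_{n,m\le N}\frac{d(n)d(m)}{(nm)^{3/4}}\,e\bigl(2\sqrt p(\sqrt n\pm\sqrt m)\bigr),
\]
with the diagonal deleted from $\mathcal S_-$. Here I would first remove the weight $p^{1/2}$ by partial summation in $p$; then in $\mathcal S_-$ separate the near-diagonal pairs with $|\sqrt n-\sqrt m|$ very small, which are handled by the trivial bound for the inner $p$-sum; and for the remaining ranges, for a fixed frequency $\nu=\sqrt n\pm\sqrt m$, treat $\sum_{p\le y}e(2\nu\sqrt p)$ by passing from $\sum_{p\le y}f(p)\log p=\sum_{k\le y}\Lambda(k)f(k)+O(\sqrt y)$ and applying a Vaughan-type (or Heath--Brown) decomposition of $\Lambda$ to reduce $\sum_{k\le y}\Lambda(k)e(2\nu\sqrt k)$, uniformly in $\nu$, to Type I sums $\sum_k\sum_\ell e(2\nu\sqrt{k\ell})$ and Type II sums $\sum_k\sum_\ell\alpha_k\beta_\ell e(2\nu\sqrt{k\ell})$. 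The Type I sums are dispatched by van der Corput's method, using $\partial_\ell^2\bigl(2\nu\sqrt{k\ell}\bigr)\asymp\nu\sqrt k\,\ell^{-3/2}$; the Type II sums are precisely where the double large sieve of Bombieri--Iwaniec enters, the square of the bilinear sum being estimated by the number of close pairs among $\{\sqrt k\}$ and among $\{\sqrt\ell\}$, which is evaluated by elementary lattice counting. Summing the resulting bounds against $d(n)d(m)(nm)^{-3/4}$ over $n,m\le N$ gives $\mathcal S_\pm\ll x^{A}N^{B+\varepsilon}$ for suitable $A,B>0$. Making this estimate strong enough to survive the full summation over $n,m\le N$ — so that the total still beats $x^{3/2}$ — is where essentially all the difficulty lies and is the step I expect to be the main obstacle.

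Finally, collecting the pieces — $x^{3/2}N^{-1/2+\varepsilon}$ from completing the diagonal, $x^{7/4+\varepsilon}N^{-1/2}+x^{2+\varepsilon}N^{-1}$ from the Voronoï tail and the cross term, and $x^{A}N^{B+\varepsilon}$ from $\mathcal S_\pm$ — and choosing $N$ so as to balance the decreasing and increasing powers of $N$ leads to the total error term $O(x^{23/16+\varepsilon})$, which is the assertion.
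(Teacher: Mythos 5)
Your overall plan matches the paper's (truncated Voronoi, square, diagonal gives the main constant, off-diagonal treated by removing the weight, replacing primes by $\Lambda$, Heath-Brown's identity, Type I/II split with van der Corput for Type I and the double large sieve plus a spacing count for Type II), but there is a genuine gap in the way you treat the Voronoi tail, and it is fatal to the final exponent.

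You bound $\sum_{p\le x}E_N(p)^2$ by the pointwise estimate $E_N(p)\ll p^{1/2+\varepsilon}N^{-1/2}$, obtaining $\ll x^{2+\varepsilon}N^{-1}$, and then the cross term by Cauchy--Schwarz, obtaining $\ll x^{7/4+\varepsilon}N^{-1/2}$. These are the right consequences of a pointwise bound, but they are too weak. To beat the trivial $x^{3/2+\varepsilon}$ you would need $N\gg x^{1/2}$ just from the $x^{2}N^{-1}$ term, and then the off-diagonal contribution (which, after all the Type I/II work, still carries factors like $x^{5/4}N^{1/2}$ and $x^{11/8}N^{3/16}$) blows past $x^{3/2}$. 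Concretely, balancing $x^{2}N^{-1}$ against $x^{5/4}N^{1/2}$ forces $N\asymp x^{1/2}$ and both sides sit at $x^{3/2}$; no choice of $N$ gets below $x^{3/2}$, let alone down to $x^{23/16}$.

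The missing idea is that $\delta_2$ must be handled in mean square over a \emph{continuous} interval, where the bound is $\int_T^{2T}\delta_2^2(t,N)\,dt\ll T^{3/2}N^{-1/2}\log^3 T+T\log^4 T$ --- a full factor $(T/N)^{1/2}$ better than summing the pointwise bound over $\asymp T/\log T$ integers. To pass from the discrete sum over primes to this continuous integral, the paper writes $\delta_2(p,M)^2=\int_{p-1}^{p}\delta_2(t,M)^2\,dt+\bigl(\delta_2(p,M)^2-\int_{p-1}^{p}\delta_2(t,M)^2\,dt\bigr)$ and controls the correction via the short-interval variance estimate $\int_T^{2T}\max_{h\le H}(\Delta(t+h)-\Delta(t))^2\,dt\ll HT\log^5 T$ of Heath-Brown and Tsang, together with the trivial Lipschitz bound for $\delta_1$. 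With the resulting bound $\sum_{x<p\le 2x}\delta_2(p,M)^2\ll x^{3/2+\varepsilon}M^{-1/2}+x^{5/4+\varepsilon}M^{1/4}$ and the corresponding Cauchy--Schwarz cross term $\ll x^{3/2+\varepsilon}M^{-1/4}$, the choice $M=x^{1/4}$ balances everything at $x^{23/16+\varepsilon}$. Without this device your scheme cannot reach the stated exponent. (A smaller remark: your $\mathcal S_+$ does not need the $\Lambda$-decomposition machinery at all, since the frequency $\sqrt n+\sqrt m$ is bounded below; the paper disposes of it directly with Cauchy--Schwarz and a first-derivative estimate.)
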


\section{Priliminary}

\subsection{Notations}
Throught this paper, $\varepsilon$ denotes a sufficiently small positive number, not necessarily the same at each occurrence. Let $p$ always denote a prime number. As usual, $d(n)$ $\Lambda(n)$ and $\mu(n)$ denote the Dirichlet divisor function, the von Mangoldt function and the M\"{o}bius function, respectively, $\mathbb{C}$ denotes the set of complex numbers, $\mathbb{R}$ denotes the set of real numbers, and $\mathbb{N}$ denotes the set of natural numbers. We write $\mathbf{e}(t):=exp(2\pi it)$ . The notation  $m\sim M$ means $M<m\leqslant2M$, $f(x)\ll g(x)$ means $f(x)=O(g(x))$; $f(x)\asymp g(x)$ means $f(x)\ll g(x)\ll f(x)$. $\gcd(m_1,\cdots,m_j)$ denotes the greatest common divisor of $m_1,\cdots,m_j$ $\in\mathbb{N}$($j=2,3,\cdots$).

\subsection{Auxiliary Lemmas}
\
\newline \indent We need the following lemmas.

\begin{lemma}\label{voronoi}
For real numbers $N$ and $x$ satisfying $1\ll N\ll x$, we have
 \begin{equation*}
 \Delta(x)=\delta_1(x,N)+\delta_2(x,N),
 \end{equation*} 
where
\begin{equation*}
  \delta_1(x,N)=\frac{x^{1/4}}{\sqrt{2}\pi}\sum_{n\leqslant N}\frac{d(n)}{n^{3/4}}\cos\left(4\pi\sqrt{nx}-\frac{\pi}{4}\right),
\end{equation*}
and $\delta_2(x,N)$ satisfies:

 (i) $\delta_2(x,N)\ll x^{1/2}N^{-1/2}$.

(ii) For any fixed real number $T\geqslant2$, we have
\begin{equation*}
  \int_{T}^{2T}\delta_2(x,N)dx\ll\frac{T^{3/2}\log^3T}{N^{1/2}}+T\log^4T.
\end{equation*}
\end{lemma}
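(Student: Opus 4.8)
The plan is to treat both assertions by classical means: the decomposition together with (i) is the truncated Voronoi formula for $\Delta(x)$, while (ii) follows from it and the classical mean-value bound for $\Delta$.

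For the decomposition and (i) I would start from Voronoi's summation formula, which writes $\Delta(x)$ — for $x\notin\mathbb{N}$, with a harmless $O(d(x))$ modification otherwise — as a series $\sum_{n\ge1}d(n)\,\psi(nx)$ in which $\psi$ is a fixed combination of the Bessel functions $Y_1$ and $K_1$ evaluated at $4\pi\sqrt{nx}$. Inserting the large-argument asymptotics $Y_1(u)=\sqrt{2/(\pi u)}\bigl(\sin(u-\tfrac{3\pi}{4})+O(u^{-1})\bigr)$ and $K_1(u)\ll e^{-u}u^{-1/2}$: the $K_1$-part is $\ll x^{1/4}e^{-4\pi\sqrt x}$ (every argument $4\pi\sqrt{nx}\gg1$ since $x$ is large), hence negligible; the leading piece of the $Y_1$-part restricted to $n\le N$ reproduces exactly $\delta_1(x,N)$; and what remains — the tail $n>N$ of that leading piece plus the entire $O(u^{-1})$-correction, whose contribution summed over all $n\ge1$ is $\ll x^{-1/4}$ by absolute convergence — is $\delta_2(x,N)$. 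The one step requiring genuine work is the bound on the oscillatory tail $\frac{x^{1/4}}{\sqrt2\,\pi}\sum_{n>N}\frac{d(n)}{n^{3/4}}\cos\!\bigl(4\pi\sqrt{nx}-\tfrac{\pi}{4}\bigr)$: partial summation in $n$ reduces it to a uniform estimate for the divisor-twisted exponential sum $\sum_{N<n\le y}d(n)\mathbf{e}(2\sqrt{nx})$, and the standard bound for this sum — obtained from a further Voronoi-type transformation, or by writing $d(n)=\sum_{de=n}1$ and running a two-dimensional van der Corput argument — gives the tail as $\ll x^{1/2}N^{-1/2}$ up to a power of $\log x$, which is (i). An alternative derivation of (i) runs through truncated Perron's formula for $\sum_{n\le x}d(n)$, shifting the line of integration past the poles at $s=1$ and $s=0$ and evaluating the remaining integral on a line $\Re s<0$ via the functional equation of $\zeta(s)^2$ and stationary phase; this yields $\delta_1(x,N)+\delta_2(x,N)$ directly, with $N$ tied to the height of the Perron contour.

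For (ii) I would write $\delta_2(x,N)=\Delta(x)-\delta_1(x,N)$ and estimate the two integrals separately. For $\delta_1$, integrate term by term: on $[T,2T]$ the phase $4\pi\sqrt{nx}$ has monotone derivative $2\pi\sqrt{n/x}\asymp\sqrt{n/T}$, so the first-derivative test gives $\int_T^{2T}x^{1/4}\cos\!\bigl(4\pi\sqrt{nx}-\tfrac\pi4\bigr)\,dx\ll T^{3/4}n^{-1/2}$, whence $\int_T^{2T}\delta_1(x,N)\,dx\ll T^{3/4}\sum_{n\le N}d(n)n^{-5/4}\ll T^{3/4}$. For $\Delta$ itself I would invoke the classical bound $\int_1^{X}\Delta(x)\,dx\ll X^{3/4}$ (Voronoi; equivalently a term-by-term integration of the Voronoi series by the same first-derivative-test computation), so that $\int_T^{2T}\Delta(x)\,dx\ll T^{3/4}$. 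Combining, $\int_T^{2T}\delta_2(x,N)\,dx\ll T^{3/4}$, which lies comfortably inside the claimed bound $\frac{T^{3/2}\log^3T}{N^{1/2}}+T\log^4T$ — indeed already inside its second term.

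The genuine obstacle is thus confined to part (i), namely the uniform estimate for the divisor sum $\sum_{N<n\le y}d(n)\mathbf{e}(2\sqrt{nx})$ that controls the Voronoi tail; everything else is bookkeeping with Bessel asymptotics and the first-derivative test. For the main theorem one will also want the mean-square companion $\int_T^{2T}\delta_2(x,N)^2\,dx\ll\frac{T^{3/2}\log^3T}{N^{1/2}}+T\log^4T$ (which has exactly the right-hand side appearing in (ii)): this comes from the same splitting by squaring out, the diagonal terms $n=m$ of the oscillatory tail producing the first term via $\sum_{n>N}d^2(n)n^{-3/2}\asymp N^{-1/2}\log^3N$, and the off-diagonal terms $n\ne m$ being absorbed into the $T\log^4T$ term by the first-derivative test after a dyadic decomposition.
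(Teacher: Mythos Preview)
The paper does not prove this lemma at all; its proof reads in full ``See, for example, the following references: Lau and Tsang, Tsang, Zhai.'' Your sketch therefore goes considerably further than the paper itself, and the outline you give for part~(i) --- Voronoi summation, Bessel asymptotics, then partial summation plus an exponential-sum estimate for the divisor-twisted tail, or alternatively Perron and the functional equation of $\zeta(s)^2$ --- is exactly the classical route found in those references.

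Two remarks on part~(ii). First, the claim $\int_1^X\Delta(x)\,dx\ll X^{3/4}$ is slightly off under the paper's normalisation $\Delta(x)=\sum_{n\le x}d(n)-x\log x-(2\gamma-1)x$: the residue of $\zeta(s)^2x^s/s$ at $s=0$ contributes the constant $\zeta(0)^2=\tfrac14$, so in fact $\int_1^X\Delta(x)\,dx=\tfrac14X+O(X^{3/4})$. This is harmless for your argument, since the resulting bound $\int_T^{2T}\delta_2\ll T$ is still absorbed by the stated term $T\log^4T$. Second --- and you already noticed this --- the first-moment statement~(ii) is not what the paper actually invokes: in the estimation of $S_{22}$ the authors use the mean-\emph{square} bound
\[
\int_T^{2T}\delta_2(x,N)^2\,dx\ll \frac{T^{3/2}\log^3T}{N^{1/2}}+T\log^4T,
\]
which happens to have the same right-hand side. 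Your final paragraph correctly identifies both this discrepancy and the diagonal/off-diagonal mechanism behind the mean-square estimate; that is precisely what the cited references supply.
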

\begin{proof}
See, for example, the following references: Lau and Tsang\cite{MR2475967}, Tsang\cite{MR1162488}, Zhai\cite{MR2067871}.
\end{proof}

\begin{lemma}\label{H-B identity}
  Let $z\geqslant1$ be a real number and $k\geq1$ be an integer. Then for any $n\leqslant 2z^k$, there holds
\begin{equation}
  \Lambda(n)=\sum_{j=1}^{k}(-1)^{j-1}\binom{k}{j}\mathop{\sum\cdots\sum}_{\substack{n_1n_2\cdots n_{2j}=n\\n_{j+1},\cdots,n_{2j}\leqslant z}}(\log n_1)\mu(n_{j+1})\cdots\mu(n_{2j}).
\end{equation}
\end{lemma}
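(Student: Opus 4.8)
The plan is to obtain the identity by manipulating Dirichlet series in the half‑plane $\Re s>1$ (where everything below converges absolutely, so products may be rearranged and coefficients read off uniquely), and then comparing coefficients of $n^{-s}$; the two essential ingredients are the truncated M\"obius polynomial and a single binomial expansion, while the ``tail'' term is killed precisely by the hypothesis $n\leqslant 2z^{k}$.

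First I would introduce the Dirichlet polynomial $M(s):=\sum_{m\leqslant z}\mu(m)m^{-s}$ and look at $1-\zeta(s)M(s)$. Its coefficient of $n^{-s}$ equals $-\sum_{d\mid n,\,d\leqslant z}\mu(d)$ for $n\geqslant 2$ and $1-\mu(1)=0$ for $n=1$; since for $2\leqslant n\leqslant z$ the truncation is inactive and $\sum_{d\mid n}\mu(d)=0$, the series $1-\zeta M$ is supported on integers $>z$. Being a $k$‑fold Dirichlet convolution of such a series, $(1-\zeta M)^{k}$ is then supported on integers $>z^{k}$ (a product of $k$ numbers each $>z>0$ exceeds $z^{k}$).

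Next I would write
$$-\frac{\zeta'}{\zeta}=-\frac{\zeta'}{\zeta}\bigl[\,1-(1-\zeta M)^{k}\,\bigr]+\Bigl(-\frac{\zeta'}{\zeta}\Bigr)(1-\zeta M)^{k},$$
and expand $1-(1-\zeta M)^{k}=\sum_{j=1}^{k}\binom{k}{j}(-1)^{j-1}(\zeta M)^{j}$ by the binomial theorem, which gives
$$-\frac{\zeta'}{\zeta}\bigl[\,1-(1-\zeta M)^{k}\,\bigr]=\sum_{j=1}^{k}\binom{k}{j}(-1)^{j-1}(-\zeta')\,\zeta^{\,j-1}M^{j}.$$
Reading off the coefficient of $n^{-s}$: the factor $-\zeta'$ contributes $\log n_{1}$, the factor $\zeta^{\,j-1}$ contributes an unrestricted $(j-1)$‑fold convolution over $n_{2},\dots,n_{j}$, and $M^{j}$ contributes $\mu(n_{j+1})\cdots\mu(n_{2j})$ with each of $n_{j+1},\dots,n_{2j}\leqslant z$; assembling these under $n_{1}n_{2}\cdots n_{2j}=n$ reproduces exactly the multiple sum on the right‑hand side of the asserted identity.

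It remains to show that the tail term $\bigl(-\zeta'/\zeta\bigr)(1-\zeta M)^{k}$ contributes nothing to the coefficient of $n^{-s}$ when $n\leqslant 2z^{k}$ — this is the only point that requires genuine care. That coefficient is $\sum_{m\ell=n}\Lambda(m)c_{\ell}$, where $c_{\ell}=0$ unless $\ell>z^{k}$; the term with $m=1$ vanishes because $\Lambda(1)=0$, and any term with $m\geqslant 2$ forces $n=m\ell\geqslant 2\ell>2z^{k}\geqslant n$, a contradiction. Hence the tail coefficient is $0$ for every $n\leqslant 2z^{k}$, and comparing coefficients in the displayed identity yields
$$\Lambda(n)=\sum_{j=1}^{k}(-1)^{j-1}\binom{k}{j}\mathop{\sum\cdots\sum}_{\substack{n_1n_2\cdots n_{2j}=n\\ n_{j+1},\cdots,n_{2j}\leqslant z}}(\log n_1)\mu(n_{j+1})\cdots\mu(n_{2j}),$$
as claimed. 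The Dirichlet‑series manipulation and the coefficient bookkeeping are routine; the main (indeed, only) obstacle is the truncation step above, where the factor $2$ in the hypothesis $n\leqslant 2z^{k}$ is used in an essential way.
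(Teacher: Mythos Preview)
Your proof is correct and is precisely the standard Dirichlet-series argument of Heath--Brown that the paper cites (the paper gives no independent proof, only the reference). The key steps---the support of $(1-\zeta M)^{k}$ on integers $>z^{k}$, the binomial expansion of $1-(1-\zeta M)^{k}$, and the use of $\Lambda(1)=0$ together with $n\leqslant 2z^{k}$ to kill the tail---are exactly as in the original.
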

\begin{proof}
 See the argument on pp.1366-1367 of Heath-Brown\cite{MR0678676}.
\end{proof}

\begin{lemma}\label{Double large sieve}
  Let $\mathscr{X}:=\{x_r\}$, $\mathscr{Y}:=\{y_s\}$ be two finite sequence of real numbers with 
\begin{equation*}
  |x_r|\leqslant X, \qquad|y_s|\leqslant Y,
\end{equation*}
and $\varphi_r,\psi_s\in\mathbb{C}$. Defining the bilinear forms
\begin{equation*}
  {\mathscr{B}}_{\varphi\psi}(\mathscr{X},\mathscr{Y})=\sum_{r\sim R}\sum_{s\sim S}\varphi_r\psi_s\mathbf{e}(x_ry_s),
\end{equation*}
then we have
\begin{equation*}
  |\mathscr{B}_{\varphi\psi}(\mathscr{X},\mathscr{Y})|^2\leqslant20(1+XY)\mathscr{B}_{\varphi}(\mathscr{X},Y)\mathscr{B}_{\psi}(\mathscr{Y},X),
\end{equation*}\
with
\begin{equation*}
  \mathscr{B}_{\varphi}(\mathscr{X},Y)=\sum_{|x_{r_1}-x_{r_2}|\leqslant Y^{-1}}|\varphi_{r_1}\varphi_{r_2}|
\end{equation*}
and $\mathscr{B}_{\psi}(\mathscr{Y},X)$ defined similarly.
\end{lemma}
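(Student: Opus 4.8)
The plan is to prove this by the mollification (``double large sieve'') argument of Bombieri and Iwaniec: one replaces the point configurations $\mathscr X,\mathscr Y$ by genuine $L^2(\mathbb R)$-functions, so that $\mathscr B_{\varphi\psi}(\mathscr X,\mathscr Y)$ becomes a pairing to which the Cauchy--Schwarz inequality and Plancherel's theorem apply, and $\mathscr B_\varphi(\mathscr X,Y),\mathscr B_\psi(\mathscr Y,X)$ emerge (up to the factors $Y$, $X$) as the squared $L^2$-norms of the smoothed objects. Throughout, $\widehat{f}(\xi)=\int f(t)\mathbf{e}(-t\xi)\,dt$ and $\check{f}$ is the inverse transform. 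One first disposes of the trivial range: if $XY\leqslant\tfrac12$ then $|x_{r_1}-x_{r_2}|\leqslant 2X\leqslant Y^{-1}$ for all $r_1,r_2$, so $\mathscr B_\varphi(\mathscr X,Y)=(\sum_r|\varphi_r|)^2$ and likewise $\mathscr B_\psi(\mathscr Y,X)=(\sum_s|\psi_s|)^2$, whence the inequality follows from $|\mathscr B_{\varphi\psi}(\mathscr X,\mathscr Y)|\leqslant(\sum_r|\varphi_r|)(\sum_s|\psi_s|)$. So assume $XY\geqslant\tfrac12$.

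Fix an even $w\in C_c^\infty(\mathbb R)$ with $0\leqslant w\leqslant1$, $w\equiv1$ on $[-1,1]$ and $\mathrm{supp}\,w\subseteq[-2,2]$, and set $w_Y(\xi)=w(\xi/Y)$, $w_X(\xi)=w(\xi/2X)$. With $\mu_\varphi=\sum_r\varphi_r\delta_{x_r}$, $\mu_\psi=\sum_s\psi_s\delta_{y_s}$, let $F,H$ be the Schwartz functions determined by $\widehat{F}=\widehat{\mu_\varphi}\,w_Y$ and $\widehat{H}=\widehat{\mu_\psi}\,w_X$, so that $F(x)=\sum_r\varphi_r\,\check{w}_Y(x-x_r)$ with $\check{w}_Y$ a fixed Schwartz profile concentrated at scale $Y^{-1}$ and $\int\check{w}_Y=1$, and similarly for $H$. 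Since $w_Y\equiv1$ on $[-Y,Y]\ni y_s$, one has $\widehat{F}(-y_s)=\widehat{\mu_\varphi}(-y_s)=\sum_r\varphi_r\mathbf{e}(x_ry_s)$, hence
\begin{equation*}
\mathscr B_{\varphi\psi}(\mathscr X,\mathscr Y)=\sum_s\psi_s\widehat{F}(-y_s)=\int_{\mathbb R}F(x)\Big(\sum_s\psi_s\mathbf{e}(xy_s)\Big)dx=\int_{\mathbb R}F(x)\,\widehat{\mu_\psi}(-x)\,dx.
\end{equation*}
Because $w_X(-x)=1$ for $|x|\leqslant2X$ while $F$ decays faster than any power of $|x|$ once $|x|\geqslant2X$ (there $|x-x_r|\geqslant X$ for every $r$), replacing $\widehat{\mu_\psi}(-x)$ by $\widehat{H}(-x)=w_X(-x)\widehat{\mu_\psi}(-x)$ changes the integral by $O_N\!\big(\|\varphi\|_1\|\psi\|_1(XY)^{1-N}\big)$ for every $N$. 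Combining this with the elementary bounds $\|\varphi\|_1^2\ll(1+XY)\mathscr B_\varphi(\mathscr X,Y)$ and $\|\psi\|_1^2\ll(1+XY)\mathscr B_\psi(\mathscr Y,X)$ (partition the points into the $\ll1+XY$ intervals of length $Y^{-1}$, resp. $X^{-1}$, meeting $[-X,X]$, resp. $[-Y,Y]$, then apply Cauchy--Schwarz), the error is $\ll\sqrt{\mathscr B_\varphi(\mathscr X,Y)\,\mathscr B_\psi(\mathscr Y,X)}$ upon taking $N$ large. Therefore, by Cauchy--Schwarz and Plancherel,
\begin{equation*}
|\mathscr B_{\varphi\psi}(\mathscr X,\mathscr Y)|\leqslant\Big|\int_{\mathbb R}F(x)\widehat{H}(-x)\,dx\Big|+O\big(\sqrt{\mathscr B_\varphi\mathscr B_\psi}\big)\leqslant\|F\|_2\,\|H\|_2+O\big(\sqrt{\mathscr B_\varphi\mathscr B_\psi}\big).
\end{equation*}

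For the norms, Parseval gives $\|F\|_2^2=\int_{\mathbb R}|\widehat{\mu_\varphi}(\xi)|^2w_Y(\xi)^2\,d\xi=\sum_{r_1,r_2}\varphi_{r_1}\overline{\varphi_{r_2}}\,g_Y(x_{r_1}-x_{r_2})$, where $g_Y=\widehat{w_Y^2}$ satisfies $g_Y(t)\ll_N Y(1+Y|t|)^{-N}$ and $\int g_Y=w(0)^2=1$. Splitting the pair sum dyadically in $|x_{r_1}-x_{r_2}|$ and invoking the elementary estimate $\sum_{|x_{r_1}-x_{r_2}|\leqslant m/Y}|\varphi_{r_1}\varphi_{r_2}|\leqslant(2m+1)\,\mathscr B_\varphi(\mathscr X,Y)$ (group the $x_r$ into intervals of length $Y^{-1}$), one obtains $\|F\|_2^2\ll Y\,\mathscr B_\varphi(\mathscr X,Y)$, and symmetrically $\|H\|_2^2\ll X\,\mathscr B_\psi(\mathscr Y,X)$. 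Combining everything yields $|\mathscr B_{\varphi\psi}(\mathscr X,\mathscr Y)|^2\ll XY\,\mathscr B_\varphi(\mathscr X,Y)\,\mathscr B_\psi(\mathscr Y,X)+O(\mathscr B_\varphi\mathscr B_\psi)\ll(1+XY)\,\mathscr B_\varphi(\mathscr X,Y)\,\mathscr B_\psi(\mathscr Y,X)$, which is the claim with some absolute constant in place of $20$; the value $20$ comes from Bombieri and Iwaniec's optimal choice of mollifier.

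I expect the main obstacle to be twofold. First, the Fourier bookkeeping: one must verify that the truncation error in passing from $\widehat{\mu_\psi}(-\cdot)$ to $\widehat{H}(-\cdot)$ is genuinely negligible, which is precisely why the range $XY\leqslant\tfrac12$ has to be removed beforehand (there the ``$1$'' in $1+XY$ does the work). Second, the sharp constant: to avoid the soft dyadic estimate and land on exactly $20(1+XY)$ one should instead engineer the mollifier so that its self-correlation $g_Y=\widehat{w_Y^2}$ is supported in $[-Y^{-1},Y^{-1}]$, so that \emph{only} the pairs already present in $\mathscr B_\varphi(\mathscr X,Y)$ contribute and no tail remains; arranging this together with its analogue for $H$ with the optimal numerical constants is the delicate part of the argument.
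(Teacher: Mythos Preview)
The paper does not prove this lemma; it simply cites Proposition~1 of Fouvry and Iwaniec. Your proposal reproduces, in outline, exactly the mollification argument that underlies that reference (and, before it, Bombieri--Iwaniec), so there is no methodological divergence to discuss.

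Your argument is correct as written and yields
\[
|\mathscr B_{\varphi\psi}(\mathscr X,\mathscr Y)|^2\leqslant C\,(1+XY)\,\mathscr B_\varphi(\mathscr X,Y)\,\mathscr B_\psi(\mathscr Y,X)
\]
with some absolute $C>0$. The only point you do not pin down is the value $C=20$, and you correctly diagnose why: with a generic smooth $w$, the self-correlation $g_Y=\widehat{w_Y^2}$ has Schwartz tails and the dyadic tail sum costs an uncontrolled constant. To recover exactly $20$ one replaces the smooth majorant by the explicit Fej\'er-type kernel used by Bombieri and Iwaniec (so that $g_Y$ is compactly supported in $[-Y^{-1},Y^{-1}]$ and only the pairs already in $\mathscr B_\varphi$ survive), and then tracks the numerical factors. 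For the purposes of the present paper this refinement is irrelevant, since the lemma is only applied with $\ll$ in Lemma~\ref{Type II est}.
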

\begin{proof}
  See Proposition 1 of Fouvry and Iwaniec\cite{MR1027058}.
\end{proof}

\begin{lemma}\label{(2.2) of Zhai}
  Let $\{a_i\}$ and $\{b_i\}$ be two finite sequences of real numbers and let $\delta>0$ be a real number. Then we have
\begin{equation*}
  \#\{(r,s):|a_r-b_s|\leqslant\delta\}\leqslant3(\#\{(r,r'):|a_r-a_{r'}|\leqslant\delta\})^{1/2}(\#\{(s,s'):|b_s-b_{s'}|\leqslant\delta\})^{1/2}
\end{equation*}
\end{lemma}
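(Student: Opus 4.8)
The plan is to collapse all three cardinalities to the $\ell^2$-norm of a block-occupation sequence and then invoke Cauchy--Schwarz. Partition the real line into the half-open intervals $J_k:=[k\delta,(k+1)\delta)$, $k\in\mathbb{Z}$, and set $\alpha_k:=\#\{r:a_r\in J_k\}$ and $\beta_k:=\#\{s:b_s\in J_k\}$; since the $J_k$ tile $\mathbb{R}$, each $a_r$ and each $b_s$ lies in exactly one block. The first — and only geometric — observation is that if $a_r\in J_k$ and $|a_r-b_s|\leqslant\delta$, then $b_s\in[a_r-\delta,a_r+\delta]\subseteq[(k-1)\delta,(k+2)\delta)=J_{k-1}\cup J_k\cup J_{k+1}$. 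Grouping the pairs $(r,s)$ according to the block containing $a_r$ therefore gives
\[
  \#\{(r,s):|a_r-b_s|\leqslant\delta\}\leqslant\sum_{k\in\mathbb{Z}}\alpha_k(\beta_{k-1}+\beta_k+\beta_{k+1}).
\]

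Next I would split the right-hand side as $\sum_{j\in\{-1,0,1\}}\sum_k\alpha_k\beta_{k+j}$ and apply Cauchy--Schwarz to each of the three inner sums; after reindexing the summation variable (which ranges over all of $\mathbb{Z}$) every one of them is bounded by $\big(\sum_k\alpha_k^2\big)^{1/2}\big(\sum_k\beta_k^2\big)^{1/2}$, whence
\[
  \#\{(r,s):|a_r-b_s|\leqslant\delta\}\leqslant 3\Big(\sum_k\alpha_k^2\Big)^{1/2}\Big(\sum_k\beta_k^2\Big)^{1/2}.
\]
It then remains to recognise each factor. Two points $a_r,a_{r'}$ lying in a common block $J_k$ are at distance strictly less than $\delta$, hence in particular $\leqslant\delta$; consequently $\sum_k\alpha_k^2=\#\{(r,r'):a_r,a_{r'}\text{ in a common }J_k\}\leqslant\#\{(r,r'):|a_r-a_{r'}|\leqslant\delta\}$, and symmetrically for the $\beta_k$. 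Substituting these two inequalities yields the asserted bound with the constant $3$.

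The only point that needs care is keeping the radius in all three counts equal to $\delta$: the crude route $|a_r-b_s|,|a_{r'}-b_s|\leqslant\delta\Rightarrow|a_r-a_{r'}|\leqslant2\delta$ would force a $2\delta$ into the pair-count on the diagonal side and then a doubling of the block width, which cannot be undone. The half-open blocks of length exactly $\delta$ avoid this, because membership of two points in one block is strictly stronger than being $\delta$-close, so the last step is lossless; the cost is instead absorbed into the harmless factor $3$ produced by the three consecutive blocks $J_{k-1},J_k,J_{k+1}$ in the localization step, which is precisely the constant claimed.
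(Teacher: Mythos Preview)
Your proof is correct: the block decomposition into intervals $J_k=[k\delta,(k+1)\delta)$, followed by Cauchy--Schwarz on the occupation numbers and the observation $\sum_k\alpha_k^2\leqslant\#\{(r,r'):|a_r-a_{r'}|\leqslant\delta\}$, is exactly the standard argument. The paper does not supply its own proof but simply refers to Zhai~\cite{MR2168766}, where the same block-counting argument is carried out, so your approach coincides with the cited one.
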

\begin{proof}
  See the argument of P266-267 in Zhai\cite{MR2168766}.
\end{proof}

\begin{lemma}\label{Robert and Sargos}
  Let $\alpha\neq0,1$ be a fixed real integer. For any integer $M\geqslant2$ and any real number $\delta>0$, we define $\mathcal{N}(M,\delta)$ as being the number of quadruplets
\begin{equation*}
  \underline{m}=(m_1,m_2,m_3,m_4)\in\{M+1,M+2,\cdots,2M\}^4
\end{equation*}
which satisfy the inequality
\begin{equation*}
  |{m_1}^{\alpha}+{m_2}^{\alpha}-{m_3}^{\alpha}-{m_4}^{\alpha}|\leqslant\delta M^{\alpha},
\end{equation*}
then we have
\begin{equation*}
  \mathcal{N}(M,\delta)\ll M^{2+\varepsilon}+\delta M^{4+\varepsilon},
\end{equation*}
where the implied constant depends only on $\varepsilon$.
\end{lemma}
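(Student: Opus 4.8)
The plan is a double reduction that turns the four‑variable count into a one‑parameter collision problem for the divided differences of $x^\alpha$, after which the only real difficulty is a non‑clustering estimate for those differences.

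\emph{Step 1 (reduction to small $\delta$).} If $\delta\geqslant1$ then $\mathcal N(M,\delta)\leqslant M^4\ll\delta M^4$, so I may assume $\delta<1$. Fixing $(m_2,m_3,m_4)$, the inequality confines $m_1^\alpha$ to an interval of length $2\delta M^\alpha$; since $(m+1)^\alpha-m^\alpha\asymp M^{\alpha-1}$ on $(M,2M]$, that interval contains $\ll 1+\delta M$ integers $m_1$, so trivially $\mathcal N(M,\delta)\ll M^3+\delta M^4$. When $\delta\geqslant M^{-1}$ this is already $\ll\delta M^4$, so it remains to handle $\delta<M^{-1}$; then $\delta M<1$, and for $m,n\in(M,2M]$ the inequality $|m^\alpha-n^\alpha|\leqslant\delta M^\alpha$ forces $|m-n|\ll1$.

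\emph{Step 2 (passage to a one‑parameter count).} I would write the inequality as $|(m_1^\alpha-m_3^\alpha)-(m_4^\alpha-m_2^\alpha)|\leqslant\delta M^\alpha$ and sum over $(m_1,m_3)$ first, obtaining $\mathcal N(M,\delta)=\sum_{m_1,m_3}\rho_\delta(m_1^\alpha-m_3^\alpha)$ with
\[
  \rho_\delta(w):=\#\{(m,n)\in((M,2M]\cap\mathbb Z)^2:\ |m^\alpha-n^\alpha-w|\leqslant\delta M^\alpha\}.
\]
By Step 1 the pairs with $m_1=m_3$ contribute $\ll M\cdot\rho_\delta(0)\ll M^2$. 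For the remaining $\ll M^2$ pairs one has $|m_1^\alpha-m_3^\alpha|\asymp|m_1-m_3|M^{\alpha-1}\gg M^{\alpha-1}$, so it suffices to prove
\[
  \rho_\delta(w)\ \ll\ M^{\varepsilon}+\delta M^{2+\varepsilon}\qquad\text{whenever}\ |w|\gg M^{\alpha-1},
\]
since summing this over the $\ll M^2$ pairs $(m_1,m_3)$ and adding the diagonal gives $\mathcal N(M,\delta)\ll M^{2+\varepsilon}+\delta M^{4+\varepsilon}$.

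\emph{Step 3 (divided differences).} To bound $\rho_\delta(w)$, fix $w>0$ with $w\gg M^{\alpha-1}$ (the cases $w<0$ and $\alpha<0$ being symmetric). For $m>n$ put $k=m-n\geqslant1$ and $\psi_k(m)=m^\alpha-(m-k)^\alpha$. Then $\psi_k(m)=\alpha k\,\xi^{\alpha-1}\asymp kM^{\alpha-1}$ for some $\xi\in(m-k,m)$, and $\psi_k'(m)=\alpha(\alpha-1)\int_{m-k}^{m}t^{\alpha-2}\,dt$ has constant sign with $|\psi_k'(m)|\asymp kM^{\alpha-2}$ --- \emph{this is exactly where the hypothesis $\alpha\neq0,1$ is used} --- so $\psi_k$ is strictly monotone on $(M,2M]$ with successive values $\asymp kM^{\alpha-2}$ apart. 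Hence only $k$ with $k\asymp D:=wM^{1-\alpha}\in[1,M]$ contribute to $\rho_\delta(w)$, and each such $k$ yields $\ll 1+\delta M^2/k$ values of $m$ with $\psi_k(m)\in[w-\delta M^\alpha,\,w+\delta M^\alpha]$. If $D\leqslant\delta M^2$, the window is at least as long as the spacing of $\{\psi_k(m)\}$, every admissible $k\asymp D$ contributes $\asymp\delta M^2/D$ values of $m$, and $\rho_\delta(w)\ll D\cdot(\delta M^2/D)=\delta M^2$ with no arithmetic input.

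\emph{Step 4 (the main obstacle).} The genuinely hard regime is $D>\delta M^2$, where the window $[w-\delta M^\alpha,w+\delta M^\alpha]$ is \emph{shorter} than the spacing of $\{\psi_k(m)\}$: now each $k\asymp D$ contributes at most one $m$, and the task becomes to count the $k\asymp D$ for which $m^\alpha-(m-k)^\alpha$ falls within $\delta M^\alpha$ of $w$ for some $m\in(M,2M]$. I expect this to be the crux: one must show that number is $\ll M^\varepsilon$, which then gives $\rho_\delta(w)\ll M^\varepsilon$ in this regime and, together with Step 3, the estimate needed in Step 2. This \emph{non‑clustering} statement --- that the divided differences of $x^\alpha$ cannot pile up near a single value --- is a Diophantine fact about the monomial $x^\alpha$ for fixed $\alpha\neq0,1$; for rational $\alpha=p/q$ it can be reduced, after raising to the $q$‑th power, to a divisor‑type bound for $m^p-n^p=(m-n)(m^{p-1}+\cdots+n^{p-1})$, and for irrational $\alpha$ it relies on the spacing/transcendence information underlying the Robert--Sargos estimates. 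Apart from this, everything --- the two reductions, the reduction to $\rho_\delta$, and the monotonicity of $\psi_k$ --- is elementary and uses only that $\alpha\neq0,1$, so the whole weight of the argument rests on this one estimate.
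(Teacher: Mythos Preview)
The paper does not prove this lemma at all; it merely cites Theorem~2 of Robert and Sargos, so there is no in-paper argument to compare against. Your Steps~1--3 are sound reductions, but Step~4 is not a proof: you explicitly identify the pointwise bound $\rho_\delta(w)\ll M^\varepsilon$ (in the regime $\delta M^2<D$) as ``the crux'' and then do not establish it. The two justifications you offer both fail. For rational $\alpha=p/q$, raising $m^{p/q}-n^{p/q}$ to the $q$-th power does \emph{not} give $m^p-n^p$ (e.g.\ $(\sqrt m-\sqrt n)^2=m+n-2\sqrt{mn}$), so the promised divisor bound does not materialise; the factorisation $m^p-n^p=(m-n)(\cdots)$ is relevant only for \emph{integer} $\alpha$, whereas the application in this very paper (Section~3) uses $0<\alpha<1$. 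For irrational $\alpha$, invoking ``the spacing/transcendence information underlying the Robert--Sargos estimates'' is circular, since that is precisely the result you are trying to prove.

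There is also a structural issue: the uniform pointwise bound $\sup_{|w|\gg M^{\alpha-1}}\rho_\delta(w)\ll M^\varepsilon$ is strictly stronger than the lemma. Writing $\mathcal N(M,\delta)=\sum_{(m_1,m_3)}\rho_\delta(m_1^\alpha-m_3^\alpha)$ shows that $\mathcal N$ is a second-moment (energy) quantity for the multiset $\{m^\alpha-n^\alpha\}$; one only needs the \emph{average} of $\rho_\delta$ over these $\asymp M^2$ values to be $\ll M^\varepsilon$, not the supremum. Robert and Sargos do not prove any such sup bound; their argument keeps two of the four variables in play (fixing the differences $m_1-m_3$ and $m_4-m_2$ and counting the remaining pairs via monotonicity and a spacing estimate), which is exactly what lets them avoid the pointwise question you have isolated. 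As written, your proposal stops at the genuinely hard step and, for non-integer $\alpha$, offers no viable route through it.
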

\begin{proof}
  See the argument on Theorem 2 of Robert and Sargos\cite{MR2212877}.
\end{proof}

\begin{lemma}\label{iwaniec and sarkozy}
  Let $N$ be a given large integer, $X$ be a real number and $\mathbf{N}$ denote the set $\{N+1,\cdots,2N\}$. Suppose 
\begin{equation*}
  v(\mathbf{N},X)=|\{n_1,n_2\in\mathbf{N};|\sqrt{n_1}-\sqrt{n_2}|\leqslant(2X)^{-1}\}|,
\end{equation*}
then we have
\begin{equation*}
  v(\mathbf{N},X)\leqslant(1+2\sqrt{2N}X^{-1})|\mathbf{N}|.
\end{equation*}
\end{lemma}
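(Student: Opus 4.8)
The plan is to reduce the condition on square roots to a condition on the integers themselves, and then to count lattice points in a short interval. The elementary identity $\sqrt{n_1}-\sqrt{n_2}=\dfrac{n_1-n_2}{\sqrt{n_1}+\sqrt{n_2}}$ is the whole engine here: since $n_1,n_2\in\mathbf{N}$ forces $\sqrt{n_1},\sqrt{n_2}\in[\sqrt{N},\sqrt{2N}]$, we have $\sqrt{n_1}+\sqrt{n_2}\leqslant 2\sqrt{2N}$, so the hypothesis $|\sqrt{n_1}-\sqrt{n_2}|\leqslant(2X)^{-1}$ implies
\begin{equation*}
  |n_1-n_2|=|\sqrt{n_1}-\sqrt{n_2}|\cdot(\sqrt{n_1}+\sqrt{n_2})\leqslant\frac{1}{2X}\cdot 2\sqrt{2N}=\frac{\sqrt{2N}}{X}.
\end{equation*}

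Next I would fix $n_1\in\mathbf{N}$ and bound the number of admissible $n_2$. By the display above, any such $n_2$ lies in the interval $[n_1-\sqrt{2N}X^{-1},\,n_1+\sqrt{2N}X^{-1}]$, which has length $2\sqrt{2N}X^{-1}$; hence it contains at most $1+2\sqrt{2N}X^{-1}$ integers. (One need not even intersect with $\mathbf{N}$; discarding that constraint only enlarges the count and gives the clean stated form.) Therefore, for every fixed $n_1\in\mathbf{N}$,
\begin{equation*}
  \#\{n_2\in\mathbf{N}:|\sqrt{n_1}-\sqrt{n_2}|\leqslant(2X)^{-1}\}\leqslant 1+2\sqrt{2N}X^{-1}.
\end{equation*}

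Finally I would sum this estimate over the $|\mathbf{N}|$ choices of $n_1$, which yields $v(\mathbf{N},X)\leqslant(1+2\sqrt{2N}X^{-1})|\mathbf{N}|$, as claimed. There is no real obstacle: the only point requiring a word of care is the passage from the square-root spacing to the integer spacing, where one must use the \emph{upper} bound $\sqrt{n_1}+\sqrt{n_2}\leqslant 2\sqrt{2N}$ valid uniformly on $\mathbf{N}$ (a lower bound would run the wrong way), and the trivial fact that an interval of length $L$ holds at most $L+1$ integers.
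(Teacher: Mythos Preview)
Your proof is correct. The paper does not supply its own argument for this lemma but merely cites Lemma~2 of Iwaniec and S\'ark\"ozy~\cite{MR0883536}; the elementary argument you give---rationalizing $\sqrt{n_1}-\sqrt{n_2}$, bounding $\sqrt{n_1}+\sqrt{n_2}\leqslant 2\sqrt{2N}$, and counting integers in a short interval around $n_1$---is exactly the standard proof of that lemma, so there is nothing to compare.
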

\begin{proof}
  See Lemma 2 of Iwaniec and S\'{a}rk\"{o}zy\cite{MR0883536}.
\end{proof}

\begin{lemma}\label{sum to int}
  Let $a,b$ are fixed real numbers and $a<b$. Suppose $f(x)$ is a real function with $|f'(x)|\leqslant1-\theta$ and $f''(x)\neq0$ on $[a,b]$. We then have
\begin{equation*}
  \sum_{a<n\leqslant b}\mathbf{e}(f(n))=\int_{a}^{b}\mathbf{e}(f(x))dx+O(\theta^{-1}).
\end{equation*}
\end{lemma}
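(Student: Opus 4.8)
The plan is to obtain the estimate by the classical route for such "sum equals integral" statements: Euler--Maclaurin summation, the Fourier expansion of the sawtooth function, and van der Corput's first--derivative test. First I would record two consequences of the hypotheses. Since $f''$ never vanishes on $[a,b]$ and $f'$ is differentiable there, Rolle's theorem applied to $f'$ shows that $f'$ is strictly monotonic on $[a,b]$ and that $f''$ keeps a constant sign; and since $|f'(x)|\leqslant 1-\theta$ we have $f'(x)\in[-1+\theta,\,1-\theta]$, so that $|f'(x)\pm 1|\geqslant\theta$ and $|f'(x)\pm m|\geqslant m-1$ for every integer $m\geqslant 2$ and all $x\in[a,b]$. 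We may also assume $a,b\notin\mathbb{Z}$, since perturbing the endpoints changes the left-hand side by at most one term of size $O(1)$ and the right-hand side by $o(1)$. Throughout we take $f$ twice continuously differentiable, which is all that the applications require and all that is needed to justify the substitution $u=f'(x)$ below.

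Writing $g(x)=\mathbf{e}(f(x))$ and $\psi(x)=\{x\}-\tfrac12$, the Stieltjes form of the Euler--Maclaurin formula gives
\begin{equation*}
  \sum_{a<n\leqslant b}g(n)=\int_a^b g(x)\,dx + g(a)\psi(a)-g(b)\psi(b) + \int_a^b \psi(x)g'(x)\,dx ,
\end{equation*}
and since $|g|=1$ and $|\psi|\leqslant\tfrac12$ the two boundary terms contribute $O(1)$. In the remaining integral I would substitute the Fourier expansion $\psi(x)=-\pi^{-1}\sum_{m\geqslant1}m^{-1}\sin(2\pi mx)$; interchanging summation and integration is legitimate because the partial sums of $\sum_{m}m^{-1}\sin(2\pi mx)$ are uniformly bounded. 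Using $g'(x)=2\pi i f'(x)\mathbf{e}(f(x))$ together with $\sin(2\pi mx)=\tfrac1{2i}\bigl(\mathbf{e}(mx)-\mathbf{e}(-mx)\bigr)$, this reduces the task to bounding $\sum_{m\geqslant1}m^{-1}\bigl(I_m^{+}-I_m^{-}\bigr)$, where
\begin{equation*}
  I_m^{\pm}=\int_a^b f'(x)\,\mathbf{e}\bigl(f(x)\pm mx\bigr)\,dx .
\end{equation*}

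For each fixed $m$ I would integrate $I_m^{\pm}$ by parts after writing $I_m^{\pm}=\int_a^b \dfrac{f'}{\phi_\pm'}\,d\!\left(\dfrac{\mathbf{e}(\phi_\pm)}{2\pi i}\right)$ with $\phi_\pm(x)=f(x)\pm mx$, noting that $\phi_\pm'=f'\pm m$ never vanishes on $[a,b]$ by the first paragraph and that $(f'/\phi_\pm')'=\pm m f''/(\phi_\pm')^2$. The boundary term is $O\!\bigl(|f'|/\min|\phi_\pm'|\bigr)$, and the resulting integral is $\ll m\int_a^b |f''|/(\phi_\pm')^2\,dx$; since $f''$ has constant sign this last integral equals $\bigl|(f'(a)\pm m)^{-1}-(f'(b)\pm m)^{-1}\bigr|$, which I would bound by $2(m-1)^{-2}$ when $m\geqslant2$ and by $2\theta^{-1}$ when $m=1$. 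Thus the terms with $m\geqslant2$ contribute $\sum_{m\geqslant2}m^{-1}O\!\bigl((m-1)^{-1}+m(m-1)^{-2}\bigr)=O(1)$, while the single term $m=1$ is $O(\theta^{-1})$. Combining this with the Euler--Maclaurin reduction yields $\sum_{a<n\leqslant b}\mathbf{e}(f(n))=\int_a^b\mathbf{e}(f(x))\,dx+O(1)+O(\theta^{-1})$, which is $\int_a^b\mathbf{e}(f(x))\,dx+O(\theta^{-1})$ since $\theta\leqslant1$.

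The Euler--Maclaurin identity and the convergence bookkeeping are routine; the one point requiring genuine care is the term $m=1$, which is precisely where the hypothesis $|f'|\leqslant 1-\theta$ (rather than merely $|f'|<1$) is used. It is the near--resonance $\mathbf{e}(f(x)\pm x)$, whose phase derivative $f'(x)\pm 1$ can be as small as $\theta$; keeping that quantity bounded below by $\theta$ is exactly what produces the factor $\theta^{-1}$ in the final bound. I would therefore isolate $m=1$ at the outset and dispatch the range $m\geqslant2$ by the convergent tail estimate above.
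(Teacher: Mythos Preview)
Your argument is correct. The paper does not actually prove this lemma; it simply cites Lemma~8.8 of Iwaniec--Kowalski, \emph{Analytic Number Theory}. What you have written is essentially the standard proof behind that citation: Euler--Maclaurin reduces the question to bounding $\int_a^b\psi(x)g'(x)\,dx$, the Fourier expansion of $\psi$ turns this into the oscillatory integrals $I_m^{\pm}$, and integration by parts together with the constant sign of $f''$ handles each frequency, with the near--resonance $m=1$ producing the factor $\theta^{-1}$. Your justification of the interchange of sum and integral (uniform boundedness of the sawtooth partial sums, combined with $|g'|\leqslant 2\pi$), and your treatment of the tail $m\geqslant 2$ via $\bigl|(f'(a)\pm m)^{-1}-(f'(b)\pm m)^{-1}\bigr|=\bigl|f'(b)-f'(a)\bigr|/\bigl|(f'(a)\pm m)(f'(b)\pm m)\bigr|\leqslant 2(m-1)^{-2}$, are both clean. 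There is nothing to add beyond noting that your write-up supplies the details the paper omits by reference.
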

\begin{proof}
  See Lemma 8.8 of Iwaniec and Kowalski\cite{MR2061214}.
\end{proof}

\begin{lemma}\label{first derivative}
  Let $F(x)$ be a real differentiable function such that $F'(x)$ is monotonic and $|F'(x)|\geqslant m>0$ for $a\leqslant x\leqslant b$. Then
\begin{equation*}
  \left|\int_{a}^{b}e^{iF(x)}dx\right|\leqslant4m^{-1}.
\end{equation*}
\end{lemma}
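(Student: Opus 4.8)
The final statement is the classical first-derivative test (van der Corput's Lemma) bounding an oscillatory integral by $4m^{-1}$ under the hypothesis that $F'$ is monotonic with $|F'| \geqslant m > 0$. The plan is to write $e^{iF(x)} = \frac{1}{iF'(x)} \frac{d}{dx}\bigl(e^{iF(x)}\bigr)$ on $[a,b]$, which is legitimate since $F'$ never vanishes there, and then integrate by parts. This gives
\begin{equation*}
  \int_a^b e^{iF(x)}\,dx = \int_a^b \frac{1}{iF'(x)}\,d\bigl(e^{iF(x)}\bigr) = \left[\frac{e^{iF(x)}}{iF'(x)}\right]_a^b - \int_a^b e^{iF(x)}\,d\!\left(\frac{1}{iF'(x)}\right).
\end{equation*}

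\textbf{Bounding the terms.} The boundary term is at most $\frac{1}{|F'(b)|} + \frac{1}{|F'(a)|} \leqslant \frac{2}{m}$ since $|e^{iF(x)}| = 1$ and $|F'| \geqslant m$. For the remaining integral, I would use that $1/F'(x)$ is monotonic on $[a,b]$ (because $F'$ is monotonic and of constant sign there, its reciprocal is monotonic), so $d(1/(iF'(x)))$ is a measure of constant sign; hence
\begin{equation*}
  \left|\int_a^b e^{iF(x)}\,d\!\left(\frac{1}{iF'(x)}\right)\right| \leqslant \int_a^b \left|d\!\left(\frac{1}{F'(x)}\right)\right| = \left|\frac{1}{F'(b)} - \frac{1}{F'(a)}\right| \leqslant \frac{1}{|F'(b)|} + \frac{1}{|F'(a)|} \leqslant \frac{2}{m}.
\end{equation*}
Adding the two contributions yields the bound $4m^{-1}$. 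If one wishes to avoid invoking differentiability of $1/F'$ (the hypothesis only gives $F$ differentiable and $F'$ monotonic), the same conclusion follows by a Riemann--Stieltjes integration by parts, using that a monotonic function is of bounded variation with total variation equal to the absolute difference of its endpoint values.

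\textbf{Main obstacle.} There is no serious analytic obstacle here; the only point requiring a little care is the regularity bookkeeping, namely justifying the integration by parts when $F'$ is merely assumed monotonic rather than $C^1$. This is handled cleanly by phrasing the argument in the Riemann--Stieltjes framework, where monotonicity of $1/F'$ suffices and the total variation bound $\int_a^b |d(1/F'(x))| = |1/F'(b) - 1/F'(a)|$ is immediate. Since this lemma is quoted from Iwaniec--Kowalski, one could alternatively just cite it, but the self-contained argument above is short enough to include in full.
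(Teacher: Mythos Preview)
Your argument is correct and is precisely the standard proof of the first-derivative test: integrate by parts against $d(e^{iF(x)})$, bound the boundary term by $2/m$, and bound the remaining Riemann--Stieltjes integral using the monotonicity (hence bounded variation) of $1/F'$ to get another $2/m$. Your handling of the regularity issue via the Riemann--Stieltjes framework is appropriate given that only monotonicity of $F'$ is assumed.

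The paper does not actually prove this lemma; it merely cites Lemma~2.1 of Ivi\'{c} (note: not Iwaniec--Kowalski as you write at the end). What you have written is essentially the proof one finds there, so your proposal is both correct and aligned with the intended source.
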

\begin{proof}
  See Lemma 2.1 of Ivi\'{c}\cite{MR0792089}.
\end{proof}

\begin{lemma}\label{deravative est}
  Suppose that $f(x):[a,b]\rightarrow\mathbb{R}$ has continuous derivatives of arbitrary order on $[a,b]$, where $1\leqslant a<b\leqslant2a$. Suppose further that we have 
\begin{equation*}
  |f''(x)|\asymp\lambda_2,\qquad x\in[a,b].
\end{equation*}
Then
\begin{equation}\label{second order derivative}
  \sum_{a<n\leqslant b}\mathbf{e}(f(n))\ll a{\lambda_2}^{1/2}+{\lambda_2}^{-1/2}.
\end{equation}
\end{lemma}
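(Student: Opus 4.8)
The plan is to run the classical van der Corput argument for the second derivative test, using Lemma~\ref{sum to int} to pass from the exponential sum to an exponential integral on short blocks and Lemma~\ref{first derivative} to estimate that integral. Since $f''$ is continuous and $|f''(x)|\asymp\lambda_2>0$ throughout $[a,b]$, it has constant sign there, so $f'$ is strictly monotonic; I may assume $f'$ is increasing. Moreover, if $\lambda_2\geqslant1$ the claimed bound is trivial, because then $a\lambda_2^{1/2}\geqslant a\geqslant b-a$ already dominates the number of terms in the sum; so from now on I assume $0<\lambda_2<1$.

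Next I would partition $[a,b]$ according to the nearest integer to $f'$: for $\nu\in\mathbb{Z}$ set $J_\nu=\{x\in[a,b]:\nu-\tfrac12\leqslant f'(x)<\nu+\tfrac12\}$, which is a subinterval of $[a,b]$ since $f'$ is monotonic. Because $f'(b)-f'(a)=\int_a^b f''(x)\,dx\asymp\lambda_2(b-a)\leqslant\lambda_2 a$, there are $\ll1+\lambda_2 a$ values of $\nu$ with $J_\nu$ nonempty. On each such block I would write $\mathbf{e}(f(n))=\mathbf{e}(f(n)-\nu n)$ (valid since $\nu n\in\mathbb{Z}$), so that with $g(x):=f(x)-\nu x$ one has $|g'(x)|=|f'(x)-\nu|\leqslant\tfrac12$ and $g''(x)=f''(x)\neq0$ on $J_\nu$; Lemma~\ref{sum to int} with $\theta=\tfrac12$ then gives
\begin{equation*}
  \sum_{n\in J_\nu}\mathbf{e}(f(n))=\int_{J_\nu}\mathbf{e}\bigl(f(x)-\nu x\bigr)\,dx+O(1).
\end{equation*}

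The remaining, and only mildly delicate, point is estimating this integral: the derivative $g'=f'-\nu$ may vanish inside $J_\nu$, so Lemma~\ref{first derivative} does not apply directly, and one must first excise a neighbourhood of that zero. I would split $J_\nu=J_\nu'\cup J_\nu''$ with $J_\nu'=\{x\in J_\nu:|f'(x)-\nu|\leqslant\lambda_2^{1/2}\}$ and $J_\nu''$ the complement. Since $|f''|\asymp\lambda_2$, the interval $J_\nu'$ has length $\ll\lambda_2^{-1/2}$, contributing $\ll\lambda_2^{-1/2}$ trivially; on each of the (at most two) subintervals comprising $J_\nu''$ one has $|g'(x)|\geqslant\lambda_2^{1/2}$ with $g'$ monotonic, so Lemma~\ref{first derivative} bounds the contribution by $\ll\lambda_2^{-1/2}$. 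Hence $\sum_{n\in J_\nu}\mathbf{e}(f(n))\ll1+\lambda_2^{-1/2}$, and summing over the $\ll1+\lambda_2 a$ relevant blocks,
\begin{equation*}
  \sum_{a<n\leqslant b}\mathbf{e}(f(n))\ll(1+\lambda_2 a)\bigl(1+\lambda_2^{-1/2}\bigr)\ll\lambda_2^{-1/2}+\lambda_2^{1/2}a,
\end{equation*}
the last step using $0<\lambda_2<1$, so that $1\leqslant\lambda_2^{-1/2}$ and $\lambda_2 a\leqslant\lambda_2^{1/2}a$. This is the asserted estimate; the only genuine subtlety, as indicated, is the excision step, everything else being routine bookkeeping over the partition.
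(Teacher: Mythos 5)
Your proof is correct and is a clean reconstruction of the standard van der Corput second-derivative argument: the paper itself gives no proof but simply cites Iwaniec--Kowalski (Cor.~8.13) and Karatsuba, and what you have written is precisely the argument underlying those references, assembled from the paper's own Lemma~\ref{sum to int} and Lemma~\ref{first derivative}. The bookkeeping is sound throughout — the reduction to $0<\lambda_2<1$, the constant sign of $f''$, the $\ll1+\lambda_2 a$ blocks $J_\nu$, the $O(1)$ error per block from Lemma~\ref{sum to int} with $\theta=\tfrac12$, and the excision of the stationary neighbourhood of length $\ll\lambda_2^{-1/2}$ before applying the first-derivative test on the remaining (at most two) monotone pieces.
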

\begin{proof}
 See Corollary 8.13 of Iwaniec and Kowalski\cite{MR2061214}, or Theorem 5 of Chapter 1 in Karatsuba\cite{MR1215269}.
\end{proof}

\begin{lemma}\label{Heath and Tsang}
Let $T$ be a large real number and $H$ be a real number such that $1\leqslant H\leqslant T$. We have
\begin{equation*}
\begin{aligned}
  \int_{T}^{2T}\max_{h\leqslant H}(\Delta(t+h)-\Delta(t))^2dt\ll HT\log^5T.
\end{aligned}
\end{equation*}
\end{lemma}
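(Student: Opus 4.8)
The statement to be proved is
\[
  \int_{T}^{2T}\max_{h\leqslant H}\bigl(\Delta(t+h)-\Delta(t)\bigr)^2\,dt\ll HT\log^5T,
\]
and the essential difficulty, compared with the fixed-shift mean square, is the presence of $\max_{h\leqslant H}$ \emph{inside} the integral. The plan is to combine the Voronoi-type truncation of Lemma \ref{voronoi} with a maximal-inequality device: one controls the maximum over $h$ of the short-interval difference by an $L^2$-bound that does not depend on where the maximum is attained.

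First I would insert $\Delta = \delta_1(\cdot,N) + \delta_2(\cdot,N)$ with a truncation level $N$ to be chosen (something like $N \asymp T/H$), so that for $t\in[T,2T]$ and $h\le H$,
\[
  \Delta(t+h)-\Delta(t)=\bigl(\delta_1(t+h,N)-\delta_1(t,N)\bigr)+\bigl(\delta_2(t+h,N)-\delta_2(t,N)\bigr).
\]
The contribution of $\delta_2$ is handled crudely: by part (i) of Lemma \ref{voronoi}, $\delta_2(t+h,N)\ll t^{1/2}N^{-1/2}$ uniformly in $h\le H$, hence $\max_{h\le H}(\delta_2(t+h,N)-\delta_2(t,N))^2\ll t N^{-1}$, and integrating over $[T,2T]$ gives $\ll T^2 N^{-1}$, which is $\ll HT$ for $N\asymp T/H$ (up to logs one can afford to lose). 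For the main term $\delta_1$, write the difference as an exponential sum: using the mean value theorem inside the cosine, or better expanding the cosine via product-to-sum, the oscillatory factor $\cos(4\pi\sqrt{n(t+h)})-\cos(4\pi\sqrt{nt})$ is $\ll \min(1, h\sqrt{n/t})$ in size, and it is this $\min$ that should ultimately deliver the saving of $H$ rather than $H^2$.

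The crux is then to bound $\int_T^{2T}\max_{h\le H}|G_h(t)|^2\,dt$ where $G_h(t)=\delta_1(t+h,N)-\delta_1(t,N)$. I would remove the maximum by the standard trick: for each $t$ let $h=h(t)$ be the (measurable) choice attaining the max, and write
\[
  G_{h(t)}(t)=G_{h(t)}(t_0)+\int_{t_0}^{t}\frac{d}{du}G_{h(t)}(u)\,du
\]
for $t_0$ the left endpoint of a short block of length $\asymp H$; since $\frac{d}{du}G_h(u)=\delta_1'(u+h,N)-\delta_1'(u,N)$, bounding $\max_h$ costs only a factor involving $\int$ of $|\delta_1'|$-type sums, which one estimates by the first and second derivative tests (Lemmas \ref{first derivative}, \ref{deravative est}) together with Parseval/mean-value machinery for Dirichlet-type series $\sum d(n)n^{-3/4}\cdots$. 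Concretely, one is led to bound, for a fixed shift, expressions like
\[
  \int_T^{2T}\Bigl|\frac{t^{1/4}}{\sqrt2\,\pi}\sum_{n\le N}\frac{d(n)}{n^{3/4}}\Bigl(\cos(4\pi\sqrt{n(t+h)}-\tfrac\pi4)-\cos(4\pi\sqrt{nt}-\tfrac\pi4)\Bigr)\Bigr|^2\,dt,
\]
expand the square, integrate termwise, and use that the off-diagonal terms (frequencies $\sqrt{n_1}\pm\sqrt{n_2}$) are bounded by the first-derivative test while the diagonal $n_1=n_2$ contributes $\ll T^{3/2}\sum_{n\le N}d^2(n)n^{-3/2}\cdot\min(1,h^2 n/T)\ll HT\log^3T$ after summing, with an extra log or two absorbed into the $\log^5$. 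The maximal device, plus the $h$-integration/smoothing used to reinstate $h\le H$, accounts for the remaining logs.

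The step I expect to be the main obstacle is precisely making the removal of $\max_{h\le H}$ lossless up to logarithms: one must avoid the trivial bound $\max_{h\le H}|G_h(t)|^2\le \sum_{j}|G_{h_j}(t)|^2$ over a net of $\asymp H$ shifts (that would cost a full factor $H$), and instead exploit that $G_h(t)$ varies slowly in $h$ on scale $\sqrt{T/N}\asymp\sqrt H$, so only $O(\log T)$ dyadic "blocks" of $h$ need be sampled, the interpolation within a block being controlled by the derivative bounds above. Once this is set up, everything else is a routine large-sieve/mean-value computation with the divisor-function coefficients, and choosing $N\asymp T/H$ balances the $\delta_2$ error against the main term.
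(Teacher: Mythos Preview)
The paper does not give a proof of this lemma at all; its entire ``proof'' is the citation ``See Heath-Brown and Tsang \cite{MR1295953}.'' So there is no in-paper argument for you to match, and what you have written is an attempt to reconstruct the original Heath-Brown--Tsang proof.

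Your skeleton is correct: Voronoi truncation $\Delta=\delta_1+\delta_2$, a pointwise bound for $\delta_2$, and a maximal-inequality device for the $\delta_1$-difference. You also correctly identify the genuine obstacle, namely removing $\max_{h\le H}$ without losing a full factor of $H$. However, the mechanism you propose for this step does not close. You say $G_h(t)$ varies slowly in $h$ on scale $\sqrt{T/N}\asymp\sqrt{H}$ and that therefore ``only $O(\log T)$ dyadic blocks of $h$ need be sampled''; but slow variation on scale $\sqrt{H}$ means one needs $\asymp H/\sqrt{H}=\sqrt{H}$ sample points to cover $[0,H]$, not $O(\log T)$, and summing the fixed-$h$ mean squares over $\sqrt{H}$ points yields $H^{3/2}T\log^{O(1)}T$ rather than $HT\log^5T$. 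The same $H^{1/2}$ loss appears if one instead uses the Sobolev bound $\max_h|G_h|^2\le H\int_0^H|\partial_hG_h|^2\,dh$ (valid since $G_0=0$): with $N\asymp T/H$ the derivative contributes $H^2\cdot TH^{-1/2}\log^{O(1)}T$. Interpreting your ``dyadic blocks'' as a dyadic decomposition of the $h$-range $[0,H]$ does not help either, since bounding $\int_T^{2T}\max_{h\sim H_k}(\cdot)^2\,dt\ll H_kT\log^{O(1)}T$ is just the original statement with $H$ replaced by $H_k$.

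What the Heath-Brown--Tsang argument actually does, and what your sketch is missing, is an additional dyadic decomposition in the frequency variable $n$, treating low frequencies $n\lesssim T/H^2$ (where the phase increment $h\sqrt{n/t}$ stays below $1$ for all $h\le H$, so one can linearise in $h$ and the maximum becomes essentially $H|\partial_h G_h^{(\mathrm{low})}(t)|_{h=0}$) separately from high frequencies $n\gtrsim T/H^2$ (where the cosine difference is $O(1)$ and one bounds the maximum by twice the sup of the truncated tail of the Voronoi sum over $[t,t+H]$, controlled via a separate maximal mean-value estimate). It is this frequency splitting, not a sampling in $h$ alone, that recovers the correct power of $H$. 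Your plan is pointed in the right direction but would need this extra layer to go through.
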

\begin{proof}
  See Heath-Brown and Tsang\cite{MR1295953}.
\end{proof}

\begin{lemma}\label{T(a,b)}
  Suppose $N_1,N_2\geqslant1$ are given real numbers, $\alpha,\beta$ are real numbers such that $0<\alpha,\beta\leqslant1$. Define
\begin{equation*}
  T(N_1,N_2,\alpha,\beta)=\sum_{\substack{n_1\sim N_1,n_2\sim N_2\\n_1\neq n_2}}\frac{1}{|{n_1}^{\alpha}-{n_2}^{\alpha}|^{\beta}}.
\end{equation*}  
Then we have
\begin{equation*}
  T(N_1,N_2,\alpha,\beta)\ll(N_1N_2)^{1-\alpha\beta/2}\log N_1\log N_2.
\end{equation*}
\end{lemma}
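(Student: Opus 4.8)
The plan is to collapse the double sum to a single sum by freezing one of the two variables. Since both $T(N_1,N_2,\alpha,\beta)$ and the claimed bound are symmetric under the exchange $N_1\leftrightarrow N_2$, I would assume without loss of generality that $N_1\leqslant N_2$, and write
\[
  T(N_1,N_2,\alpha,\beta)=\sum_{n_1\sim N_1}\Sigma(n_1),\qquad\Sigma(n_1):=\sum_{\substack{n_2\sim N_2\\ n_2\neq n_1}}\frac{1}{|n_1^{\alpha}-n_2^{\alpha}|^{\beta}}.
\]
The goal is then to establish the uniform bound $\Sigma(n_1)\ll\alpha^{-\beta}N_2^{1-\alpha\beta}\log N_2$ for every $n_1\sim N_1$, and to sum this over the $\asymp N_1$ admissible values of $n_1$.

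To estimate $\Sigma(n_1)$ I would exploit the fact that, because $0<\alpha\leqslant1$, the set $\{n_2^{\alpha}:n_2\sim N_2\}$ is uniformly well spaced: by the mean value theorem two consecutive values differ by $(n_2+1)^{\alpha}-n_2^{\alpha}=\alpha\xi^{\alpha-1}\asymp\alpha N_2^{\alpha-1}$. Fixing $n_1$ and re-labelling the admissible $n_2$ as $n_2^{(1)},n_2^{(2)},\ldots$ so that $k\mapsto|n_1^{\alpha}-(n_2^{(k)})^{\alpha}|$ is non-decreasing, a box-counting argument (an interval of length $L$ contains $O\!\left(1+L\alpha^{-1}N_2^{1-\alpha}\right)$ of these points) gives
\[
  |n_1^{\alpha}-(n_2^{(k)})^{\alpha}|\gg k\,\alpha N_2^{\alpha-1}\qquad(k\geqslant1),
\]
where for $k=1$ one uses in addition that $n_1\leqslant N_2$, so that $n_1^{\alpha}$ lies below (or, in the overlapping range $N_1\asymp N_2$, inside) the cluster $\{n_2^{\alpha}\}$ at distance at least one gap (here the exclusion $n_2\neq n_1$ is used). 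Consequently
\[
  \Sigma(n_1)\ll(\alpha N_2^{\alpha-1})^{-\beta}\sum_{1\leqslant k\leqslant 2N_2}k^{-\beta}\ll\alpha^{-\beta}N_2^{(1-\alpha)\beta}\cdot N_2^{1-\beta}\log N_2=\alpha^{-\beta}N_2^{1-\alpha\beta}\log N_2,
\]
using $\sum_{k\leqslant M}k^{-\beta}\ll M^{1-\beta}\log M$ uniformly for $0<\beta\leqslant1$ (the logarithm being needed only at the endpoint $\beta=1$).

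Summing over $n_1$ then gives $T(N_1,N_2,\alpha,\beta)\ll\alpha^{-\beta}N_1N_2^{1-\alpha\beta}\log N_2$, and since $N_1\leqslant N_2$,
\[
  N_1N_2^{1-\alpha\beta}=(N_1N_2)^{1-\alpha\beta/2}\Big(\tfrac{N_1}{N_2}\Big)^{\alpha\beta/2}\leqslant(N_1N_2)^{1-\alpha\beta/2},
\]
so that $T(N_1,N_2,\alpha,\beta)\ll(N_1N_2)^{1-\alpha\beta/2}\log N_2$, which in particular yields the asserted bound (with room to spare in the logarithmic factor). The step I expect to be the actual obstacle is the spacing/box-counting estimate for the $k$-th nearest value $(n_2^{(k)})^{\alpha}$ — in particular handling the closest term $k=1$ uniformly across the overlapping range $N_1\asymp N_2$ and the separated range, and bookkeeping the endpoint $\beta=1$; once this is in place the remainder is a one-line summation.
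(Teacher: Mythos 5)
Your proof is correct, and it takes a genuinely different route from the paper's. Both arguments ultimately rest on the mean value theorem estimate $(n+1)^{\alpha}-n^{\alpha}\asymp\alpha N^{\alpha-1}$, but they are organized differently. The paper splits the double sum into a near-diagonal piece where $|n_1^{\alpha}-n_2^{\alpha}|\leqslant(n_1n_2)^{\alpha/2}/100$ (forcing $n_1\asymp n_2$, after which Lagrange converts $|n_1^{\alpha}-n_2^{\alpha}|^{-\beta}$ into $n_1^{(1-\alpha)\beta}|n_1-n_2|^{-\beta}$) and a complementary piece handled by the trivial bound $|n_1^{\alpha}-n_2^{\alpha}|^{-\beta}\ll(n_1n_2)^{-\alpha\beta/2}$; the symmetric exponent is then obtained by taking a geometric mean of the two one-sided estimates $\Sigma_1\ll N_i^{2-\alpha\beta}\log N_i$. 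You instead freeze $n_1$, use the uniform spacing of $\{n_2^{\alpha}:n_2\sim N_2\}$ to bound the sorted distances from below by $\gg k\alpha N_2^{\alpha-1}$, sum to get $\Sigma(n_1)\ll\alpha^{-\beta}N_2^{1-\alpha\beta}\log N_2$, and then convert the asymmetric $N_1N_2^{1-\alpha\beta}$ into $(N_1N_2)^{1-\alpha\beta/2}$ by the assumption $N_1\leqslant N_2$. Your route avoids the dichotomy entirely, gives a marginally sharper bound (one logarithmic factor instead of two), and makes the $\alpha$-dependence of the implied constant explicit. The $k=1$ concern you flag is in fact harmless: if $n_1\in(N_2,2N_2]$ the exclusion $n_2\neq n_1$ forces $|n_1-n_2|\geqslant1$, and if $n_1\notin(N_2,2N_2]$ then $|n_1-n_2|\geqslant1$ automatically, so in either case the mean value theorem (with $\xi\leqslant2N_2$ and $\alpha-1\leqslant0$) yields $|n_1^{\alpha}-n_2^{\alpha}|\gg\alpha N_2^{\alpha-1}$. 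One may note that the bound $\sum_{k\leqslant M}k^{-\beta}\ll M^{1-\beta}\log M$ fails for $M\leqslant2$ when $\beta$ is near $1$, but this affects only bounded ranges and is absorbed into the implied constant (as is the corresponding degeneracy of $\log N_1\log N_2$ in the paper's own statement when $N_1<e$).
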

\begin{proof}
  We divide the sum into
\begin{equation*}
\begin{aligned}
  \sum_{\substack{n_1\sim N_1\\n_2\sim N_2}}(|{n_1}^{\alpha}-{n_2}^{\alpha}|)^{-\beta}={\sum}_1+{\sum}_2,
\end{aligned} 
\end{equation*}
where
\begin{equation*}
\begin{aligned}
  {\sum}_1&=\sum_{\substack{n_1\sim N_1,n_2\sim N_2\\|{n_1}^{\alpha}-{n_2}^{\alpha}|\leqslant\frac{(n_1n_2)^{\alpha/2}}{100}}}
  (|{n_1}^{\alpha}-{n_2}^{\alpha}|)^{-\beta},\\
  {\sum}_2&=\sum_{\substack{n_1\sim N_1,n_2\sim N_2\\|{n_1}^{\alpha}-{n_2}^{\alpha}|\geqslant\frac{(n_1n_2)^{\alpha/2}}{100}}}
  (|{n_1}^{\alpha}-{n_2}^{\alpha}|)^{-\beta}.
\end{aligned} 
\end{equation*}
Since $|{n_1}^{\alpha}-{n_2}^{\alpha}|\leqslant\frac{(n_1n_2)^{\alpha/2}}{100}$, we have $n_1\asymp n_2$ in ${\sum}_1$. By the Lagrange mean value Theorem we have
\begin{equation*}
\begin{aligned}
  {\sum}_1&\ll\sum_{\substack{n_1\sim N_1,n_2\sim N_2\\n_1\asymp n_2,n_1\neq n_2}}\frac{1}{{n_1}^{(\alpha-1)\beta}|n_1-n_2|^{\beta}}\\
  &\ll\sum_{n_1\sim N_1}\frac{1}{{n_1}^{(\alpha-1)\beta}}\sum_{\substack{n_2\sim N_2\\n_1\asymp n_2\\n_1\neq n_2}}\frac{1}{|n_1-n_2|^{\beta}},
\end{aligned} 
\end{equation*}
let $|n_1-n_2|=t$ in the latter sum, we have
\begin{equation*}
\begin{aligned}
  {\sum}_1&\ll\sum_{n_1\sim N_1}\frac{1}{{n_1}^{(\alpha-1)\beta}}\sum_{t\leqslant n_1}t^{-\beta}\\
  &\ll\sum_{n_1\sim N_1}\frac{1}{{n_1}^{\alpha\beta-1}}\log {N_1}\\
  &\ll{N_1}^{2-\alpha\beta}\log{N_1}.
\end{aligned} 
\end{equation*}
Similarly we have ${\sum}_1\ll{N_2}^{2-\alpha\beta}\log{N_2}$, thus 
\begin{equation}\label{sum1}
  {\sum}_1\ll(N_1N_2)^{1-\alpha\beta/2}\log{N_1}\log{N_2}.
\end{equation}
For ${\sum}_2$,
\begin{equation}\label{sum2}
  {\sum}_2\ll\sum_{n_1\sim N_1,n_2\sim N_2}\frac{1}{(n_1n_2)^{\alpha\beta/2}}\ll(N_1N_2)^{1-\alpha\beta/2}.
\end{equation}
Thus we finish the proof by (\ref{sum1}) and (\ref{sum2}).
\end{proof}

We state a multiple exponential sums of the following form:
\begin{equation}\label{exp sum wlog}
  \mathop{\sum_{m_1\sim M_1}\sum_{m_2\sim M_2}}_{\substack{m_1\neq m_2}}d(m_1)d(m_2)\mathop{\sum_{h\sim H}\sum_{\ell\sim L}}_{\substack{HL\asymp x}}\xi_h\eta_{\ell}\mathbf{e}\left(2(\sqrt{m_1}-\sqrt{m_2})h^{1/2}{\ell}^{1/2}\right),
\end{equation}
with $\xi_h, \eta_\ell\in\mathbb{C}, |\xi_h|\ll x^{\varepsilon}, |\eta_\ell|\ll x^{\varepsilon}$ and $M_1,M_2,H,L,x$ are given real numbers, such that $1\ll M_1,M_2\ll x$. It is called a "Type I" sum, denoted by $S_I$, if $\eta_\ell=1$ or $\eta_\ell=\log\ell$; otherwise it is called a "Type II" sum, denoted by $S_{II}$.

\begin{lemma}\label{Type I est}
  Suppose that $\xi_h\ll1$, $\eta_\ell=1$ or $\eta_\ell=\log\ell$, $HL\asymp x$. Then if $H\ll x^{1/4}$, there holds
\begin{equation*}
  x^{-\varepsilon}S_I\ll x^{1/2}{M_1}^{5/4}M_2+x^{3/4}(M_1M_2)^{7/8}.
\end{equation*}
\end{lemma}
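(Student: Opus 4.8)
The plan is to regard $S_I$ as an exponential sum in the smooth variable $\ell$, estimate the inner $\ell$-sum by a second--derivative (van der Corput type) bound, and then sum over $h$, $m_1$ and $m_2$ essentially trivially, the only nontrivial external input being Lemma~\ref{T(a,b)}. Concretely, I would fix $m_1\sim M_1$, $m_2\sim M_2$ with $m_1\neq m_2$ and $h\sim H$, and write $S_I=\sum_{m_1\neq m_2}d(m_1)d(m_2)\sum_{h\sim H}\xi_h\,\Sigma_\ell$ with
\[
  \Sigma_\ell:=\sum_{\ell\sim L}\eta_\ell\,\mathbf{e}\bigl(f(\ell)\bigr),\qquad f(\ell):=2(\sqrt{m_1}-\sqrt{m_2})h^{1/2}\ell^{1/2}.
\]
If $\eta_\ell=\log\ell$, a partial summation reduces us to $\eta_\ell=1$ at the cost of a harmless factor $\log x\ll x^{\varepsilon}$, so we may take $\eta_\ell=1$. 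On $[L,2L]$ — and note $L\asymp x/H\gg x^{3/4}\geq1$ because $H\ll x^{1/4}$ — the phase $f$ is smooth with
\[
  f''(\ell)=-\tfrac12(\sqrt{m_1}-\sqrt{m_2})h^{1/2}\ell^{-3/2}\asymp\lambda_2:=|\sqrt{m_1}-\sqrt{m_2}|\,H^{1/2}L^{-3/2},
\]
and $\lambda_2>0$ precisely because $m_1\neq m_2$. Hence Lemma~\ref{deravative est} gives
\[
  \Sigma_\ell\ll L\lambda_2^{1/2}+\lambda_2^{-1/2}=|\sqrt{m_1}-\sqrt{m_2}|^{1/2}H^{1/4}L^{1/4}+|\sqrt{m_1}-\sqrt{m_2}|^{-1/2}H^{-1/4}L^{3/4}.
\]

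Inserting this into $S_I$, using $|\xi_h|\ll1$, $d(m_i)\ll x^{\varepsilon}$, summing over the $\asymp H$ values $h\sim H$, and abbreviating $D:=|\sqrt{m_1}-\sqrt{m_2}|$, we are led to
\[
  x^{-\varepsilon}S_I\ll H^{5/4}L^{1/4}\sum_{\substack{m_1\sim M_1\\ m_2\sim M_2}}D^{1/2}\;+\;H^{3/4}L^{3/4}\sum_{\substack{m_1\sim M_1,\ m_2\sim M_2\\ m_1\neq m_2}}D^{-1/2}.
\]
For the first sum, $D\ll\max(M_1,M_2)^{1/2}$ and there are $\ll M_1M_2$ pairs, so — assuming without loss of generality $M_1\geq M_2$ — we get $\sum D^{1/2}\ll M_1M_2\max(M_1,M_2)^{1/4}=M_1^{5/4}M_2$; since moreover $H^{5/4}L^{1/4}=H\,(HL)^{1/4}\asymp Hx^{1/4}\ll x^{1/2}$ by the hypothesis $H\ll x^{1/4}$, the first term is $\ll x^{1/2}M_1^{5/4}M_2$. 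For the second sum we observe that
\[
  \sum_{\substack{m_1\sim M_1,\ m_2\sim M_2\\ m_1\neq m_2}}D^{-1/2}=\sum_{\substack{m_1\sim M_1,\ m_2\sim M_2\\ m_1\neq m_2}}\frac{1}{|m_1^{1/2}-m_2^{1/2}|^{1/2}}=T\bigl(M_1,M_2,\tfrac12,\tfrac12\bigr),
\]
so Lemma~\ref{T(a,b)} gives $\ll(M_1M_2)^{7/8}\log M_1\log M_2\ll x^{\varepsilon}(M_1M_2)^{7/8}$, while $H^{3/4}L^{3/4}=(HL)^{3/4}\asymp x^{3/4}$; hence the second term is $\ll x^{3/4+\varepsilon}(M_1M_2)^{7/8}$. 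Adding the two contributions and relabelling $\varepsilon$ yields the stated bound.

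The argument exploits no cancellation in the $m_1,m_2$ variables — all the saving comes from the oscillation in $\ell$ — which is exactly why such a crude treatment suffices for a Type~I sum. The points requiring care are therefore essentially technical: retaining the constraint $m_1\neq m_2$ so that $\lambda_2>0$ when invoking Lemma~\ref{deravative est}, the partial summation removing the $\log\ell$ weight, and the bookkeeping of the powers of $H$ and $L$. The one genuinely load-bearing observation, and the step I expect to be the crux, is that the exponent $1/4$ in the hypothesis $H\ll x^{1/4}$ is precisely what collapses the factor $H^{5/4}L^{1/4}$ into $x^{1/2}$.
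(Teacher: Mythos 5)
Your proof is correct and follows essentially the same route as the paper: second-derivative (van der Corput) bound on the inner $\ell$-sum via Lemma~\ref{deravative est}, trivial summation over $h$, then Lemma~\ref{T(a,b)} with $\alpha=\beta=1/2$ for the $\sum|\sqrt{m_1}-\sqrt{m_2}|^{-1/2}$ contribution, using $H\ll x^{1/4}$ only to bound $H^{5/4}L^{1/4}\ll x^{1/2}$. You simply make explicit a few details the paper leaves tacit (the partial summation to remove the $\log\ell$ weight, the WLOG normalization $M_1\geq M_2$, and the role of $m_1\neq m_2$ in ensuring $\lambda_2>0$).
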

\begin{proof}
  Set $f(\ell)=2({m_1}^{1/2}-{m_2}^{1/2})(h\ell)^{1/2}$. It is easy to see that 
\begin{equation*}
  f^{''}(\ell)=-\frac{1}{2}\ell^{-3/2}h^{1/2}(\sqrt{m_1}-\sqrt{m_2})\asymp|\sqrt{m_1}-\sqrt{m_2}|L^{-3/2}H^{1/2}.
\end{equation*}
If $H\ll x^{1/4}$, then by Lemma \ref{deravative est}, we deduce that
\begin{equation*}
\begin{aligned}
  x^{-\varepsilon}S_I&\ll\mathop{\sum_{m_1\sim M_1}\sum_{m_2\sim M_2}}_{\substack{m_1\neq m_2}}\sum_{h\sim H}\left|\sum_{\ell\sim L}\mathbf{e}(f(\ell))\right|\\
  &\ll\mathop{\sum_{m_1\sim M_1}\sum_{m_2\sim M_2}}_{\substack{m_1\neq m_2}}\sum_{h\sim H}\left(L(L^{-3/2}H^{1/2}|\sqrt{m_1}-\sqrt{m_2}|)^{1/2}+(L^{-3/2}H^{1/2}|\sqrt{m_1}-\sqrt{m_2}|)^{-1/2}\right)\\
  &\ll x^{1/4}H{M_1}^{5/4}M_2+x^{3/4}\mathop{\sum_{m_1\sim M_1}\sum_{m_2\sim M_2}}_{\substack{m_1\neq m_2}}|\sqrt{m_1}-\sqrt{m_2}|^{-1/2},
\end{aligned}
\end{equation*}
taking $n_1=m_1,n_2=m_2$, $\alpha=\beta=1/2$ in Lemma \ref{T(a,b)} we have
\begin{equation*}
\begin{aligned}
  x^{-\varepsilon}S_I&\ll x^{1/4}H{M_1}^{5/4}M_2+x^{3/4}(M_1M_2)^{7/8}\\
  &\ll x^{1/2}{M_1}^{5/4}M_2+x^{3/4}(M_1M_2)^{7/8}.
\end{aligned}
\end{equation*}
\end{proof}

In order to separate the dependence from the range of summation we appeal to the following formula
\begin{lemma}\label{separate}
  Let $0<V\leqslant U<\nu U<\lambda V$ and let $a_{v}$ be complex numbers with $|a_{v}|\leqslant1$. We then have
\begin{equation*}
  \sum_{U<u<\nu U}a_u=\frac{1}{2\pi}\int_{-V}^{V}\left(\sum_{V<v<\lambda V}a_vv^{-it}\right)U^{it}(\nu^{it}-1)t^{-1}dt+O(\log(2+V)),
\end{equation*}
where the constant implied in $O$ depends on $\lambda$ only.
\end{lemma}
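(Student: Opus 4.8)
The plan is to detect the summation condition $U<u<\nu U$ by a Perron-type sharp-cutoff integral. The starting observation is that the hypotheses $0<V\le U<\nu U<\lambda V$ force every integer $u$ with $U<u<\nu U$ to satisfy $V<u<\lambda V$ as well; so, writing $\mathbf{1}\{\cdot\}$ for the indicator of a condition, the left-hand side equals $\sum_{V<v<\lambda V}a_v\,\mathbf{1}\{U<v<\nu U\}$. Hence it suffices to expand the integral on the right as $\sum_{V<v<\lambda V}a_v\,K(v)$ — the $v$-sum being finite, the interchange of sum and integral is trivial — and to show that the kernel
\[
K(v)=\frac{1}{2\pi i}\int_{-V}^{V}\Big(\frac{U}{v}\Big)^{it}(\nu^{it}-1)\,\frac{dt}{t}=\frac{1}{2\pi i}\int_{-V}^{V}\frac{(\nu U/v)^{it}-(U/v)^{it}}{t}\,dt
\]
reproduces $\mathbf{1}\{U<v<\nu U\}$ up to a controllable error. (Although each of $t^{-1}(\nu U/v)^{it}$ and $t^{-1}(U/v)^{it}$ fails to be absolutely integrable near $t=0$, their difference has a removable singularity there, and each split integral converges as a principal value since $t^{-1}$ is odd.) If $V$ is bounded in terms of $\lambda$ then $\log(2+V)\asymp 1$ and only $O_\lambda(1)$ indices occur, so the statement is trivial; we may therefore assume $V$ large.

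To evaluate $K(v)$ I would write $y^{it}=e^{it\log y}$ and drop the cosine parts, which are odd in $t$ and contribute nothing in principal value; the remainder is a difference of Dirichlet sine integrals, and the classical uniform estimate $\int_0^X\frac{\sin u}{u}\,du=\frac{\pi}{2}+O\big((1+X)^{-1}\big)$, valid for all $X\ge 0$, gives
\[
K(v)=\frac{1}{2}\Big(\operatorname{sgn}\log\tfrac{\nu U}{v}-\operatorname{sgn}\log\tfrac{U}{v}\Big)+O\!\left(\frac{1}{1+V\,|\log(\nu U/v)|}+\frac{1}{1+V\,|\log(U/v)|}\right).
\]
A brief case analysis shows the main term equals $\mathbf{1}\{U<v<\nu U\}$ for every $v$ except when $v$ equals an endpoint $U$ or $\nu U$, where it is $\tfrac12$; as $|a_v|\le 1$ there are at most two such $v$, contributing $O(1)$ overall.

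It then remains to sum the error terms against $|a_v|\le 1$, and I expect this — namely the bound
\[
\sum_{V<v<\lambda V}\left(\frac{1}{1+V\,|\log(\nu U/v)|}+\frac{1}{1+V\,|\log(U/v)|}\right)\ll\log(2+V)
\]
— to be the one genuine (if routine) computation, the delicate point being the uniformity of the Dirichlet asymptotic in the endpoints $U$ and $\nu U$. Since $V\le U<\nu U<\lambda V$, every $v$ in the range satisfies $v\asymp U\asymp\nu U\asymp V$ with constants depending only on $\lambda$; substituting $v=U+r$ one gets $V\,|\log(U/v)|\asymp|r|$, so the second fraction contributes $\ll\sum_{|r|\ll V}(1+|r|)^{-1}\ll\log(2+V)$, and the first fraction is treated identically with $U$ replaced by $\nu U$. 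Combining the main term, the two boundary contributions, and this error bound yields the asserted identity, with the implied constant depending on $\lambda$ only.
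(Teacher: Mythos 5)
Your proof is correct and self-contained, and it follows what is surely the standard route (Perron-type kernel, reduction to the sine integral, and a uniform estimate for the tail). The paper itself offers no proof of this lemma: it simply cites Lemma~6 of Fouvry--Iwaniec, so there is no alternative argument in the paper to compare against. Your verification of the key error bound
\[
\sum_{V<v<\lambda V}\frac{1}{1+V\,|\log(U/v)|}\ll_\lambda\log(2+V)
\]
via the substitution $v=U+r$ and the observation $V\,|\log(U/v)|\asymp_\lambda|r|$ (valid because $V\le U<\nu U<\lambda V$ forces $v\asymp_\lambda U\asymp_\lambda V$) is the genuine computation, and it is carried out correctly; the uniform form $\int_0^X\frac{\sin u}{u}\,du=\frac{\pi}{2}+O\bigl((1+X)^{-1}\bigr)$ for all $X\ge 0$ is exactly what is needed so that the two endpoints $v$ near $U$ or near $\nu U$ do not cause trouble.

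One point worth flagging: in your definition of $K(v)$ you wrote a normalizing factor $\frac{1}{2\pi i}$, whereas the lemma as transcribed in the paper has $\frac{1}{2\pi}\,(\nu^{it}-1)\,t^{-1}$. As your own computation shows, with the literal normalization $\frac{1}{2\pi}\int_{-V}^{V}\frac{e^{itb}-e^{ita}}{t}\,dt$ the main term comes out to $\frac{i}{2}(\operatorname{sgn}b-\operatorname{sgn}a)=i\,\mathbf{1}\{U<v<\nu U\}$, i.e.\ the right-hand side would equal $i\sum_{U<u<\nu U}a_u+O(\log(2+V))$, which is not the asserted identity. In Fouvry and Iwaniec the kernel factor is $\frac{\nu^{it}-1}{it}$, so the paper's statement has dropped an $i$; your proof silently corrects this. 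It would be cleaner to point out the discrepancy explicitly rather than introduce the $i$ without comment, but the mathematics you carried out is the right one and does establish the (correctly normalized) lemma.
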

\begin{proof}
  See Lemma 6 of Fouvry and Iwaniec\cite{MR1027058}.
\end{proof}

\begin{lemma}\label{Type II est}
Suppose $|\xi_h|\ll x^{\varepsilon}, |\eta_\ell|\ll x^{\varepsilon}$ with $h\sim H$, $\ell\sim L$, $HL\asymp x$. Then for $x^{1/4}\ll H\ll{x^{1/2}}$, there holds
\begin{equation*}
\begin{aligned}
  x^{-\varepsilon}S_{II}&\ll x^{3/4}{M_1}^{9/8}{M_2}^{7/8}+x^{7/8}M_1{M_2}^{3/4}+x^{13/16}{M_1}^{19/16}{M_2}^{3/4}+x^{15/16}(M_1M_2)^{3/4}\\
  &+x^{1/2}M_1M_2.
\end{aligned}
\end{equation*}
\end{lemma}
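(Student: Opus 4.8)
The plan is to run the double large sieve, Lemma~\ref{Double large sieve}, on $S_{II}$ regarded as a bilinear form in the two ``$m$-variables'' against the two ``$h,\ell$-variables''. The phase $2(\sqrt{m_1}-\sqrt{m_2})h^{1/2}\ell^{1/2}$ is already of product type $x_ry_s$ with $r=(m_1,m_2)$, $x_r=\sqrt{m_1}-\sqrt{m_2}$, $\varphi_r=d(m_1)d(m_2)$, and $s=(h,\ell)$, $y_s=2\sqrt{h\ell}$, $\psi_s=\xi_h\eta_\ell$, so Lemma~\ref{Double large sieve} applies directly with $X\asymp M_1^{1/2}+M_2^{1/2}$ and $Y\asymp x^{1/2}$, giving
\[
  |S_{II}|^{2}\ll x^{\varepsilon}\bigl(1+(M_1^{1/2}+M_2^{1/2})x^{1/2}\bigr)\,\mathscr B_{\varphi}(\mathscr X,Y)\,\mathscr B_{\psi}(\mathscr Y,X).
\]
Before this I would, if the summation ranges of $h$ and $\ell$ are coupled by a constraint such as $h\ell\le x$, detach it by Lemma~\ref{separate} (at the cost of a harmless integration over an auxiliary parameter), and I would use conjugation of the phase (which swaps $m_1\leftrightarrow m_2$) to reduce to $M_1\ge M_2$; this is what makes the target bound asymmetric in $M_1,M_2$. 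I also expect that in the sub-range where $H$ is near $x^{1/2}$ (so $L$ is short) the alternative grouping $(m_1,m_2,h)$ against $\ell$, or a preliminary Cauchy--Schwarz, is needed, and that this is where the remaining terms of the target come from.

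The heart of the argument is the estimation of the two autocorrelation sums. The sum $\mathscr B_{\psi}(\mathscr Y,X)$ counts pairs with $|\sqrt{h\ell}-\sqrt{h'\ell'}|\le X^{-1}$; since $\sqrt{h\ell}-\sqrt{h'\ell'}\asymp(h\ell-h'\ell')x^{-1/2}$ this is a count of near-equal products, which I would bound by counting integers in a short interval and applying the divisor bound (and, where only a single factor $h^{1/2}$ or $\ell^{1/2}$ occurs in the frequency, by Lemma~\ref{iwaniec and sarkozy}). The sum $\mathscr B_{\varphi}(\mathscr X,Y)$ counts quadruplets $(m_1,m_2,m_1',m_2')$ with $|\sqrt{m_1}-\sqrt{m_2}-\sqrt{m_1'}+\sqrt{m_2'}|\le Y^{-1}\asymp x^{-1/2}$; I would split it by Lemma~\ref{(2.2) of Zhai} into a product of two quadruplet counts, one in the $m_1$'s and one in the $m_2$'s, each of which is the quantity $\mathcal N(M_i,\delta)$ of Lemma~\ref{Robert and Sargos} with $\delta\asymp x^{-1/2}M_i^{-1/2}$, hence $\ll M_i^{2+\varepsilon}+x^{-1/2}M_i^{7/2+\varepsilon}$. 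Where the bilinear form instead collapses the $m$-part onto the diagonal block $m_1\asymp m_2$ a residual sum $\sum_{m_1,m_2}|\sqrt{m_1}-\sqrt{m_2}|^{-\beta}$ appears, to be handled by Lemma~\ref{T(a,b)} with $\alpha=\tfrac12$; and the genuinely degenerate contribution (a ``diagonal'' on which a divisor-weighted exponential sum in $m_2$ survives, estimated trivially or, for large $M_2$, by Lemma~\ref{deravative est} — possibly with Lemmas~\ref{sum to int} and \ref{first derivative} for a main-term extraction) is what should produce the term $x^{1/2}M_1M_2$.

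Feeding these back, one is led (in the bulk range $M_1,M_2\ll x$) to a bound of the shape $|S_{II}|^{2}\ll x^{2+\varepsilon}(M_1+x^{-1/4}M_1^{7/4})(M_2+x^{-1/4}M_2^{7/4})$, i.e. $x^{-\varepsilon}S_{II}\ll x(M_1M_2)^{1/2}+x^{7/8}(M_1^{7/8}M_2^{1/2}+M_1^{1/2}M_2^{7/8})+x^{3/4}(M_1M_2)^{7/8}$; under $M_1\ge M_2$ each of these is dominated by one of the five claimed expressions in the appropriate parameter range, with $x^{13/16}M_1^{19/16}M_2^{3/4}$ coming from the short-$L$ treatment. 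I expect the main obstacle to be precisely this final reconciliation: each factor delivered by the double large sieve is a maximum of several monomials in $x,M_1,M_2,H,L$, so multiplying out under $HL\asymp x$, $x^{1/4}\ll H\ll x^{1/2}$ generates a long list of candidate bounds, and one must verify term by term — for the main grouping, the alternative grouping, and the diagonal contributions — that every candidate is dominated by one of the five displayed terms; this is what determines which sieve configuration and which splitting thresholds to use.
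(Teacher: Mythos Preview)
Your main grouping is not the paper's, and it does not reach the stated bound. You put $r=(m_1,m_2)$, $s=(h,\ell)$ with $x_r=\sqrt{m_1}-\sqrt{m_2}$ and $y_s=2\sqrt{h\ell}$; carrying this through as you sketch gives
\[
  |S_{II}|^{2}\ll x^{2+\varepsilon}\bigl(M_1+x^{-1/4}M_1^{7/4}\bigr)\bigl(M_2+x^{-1/4}M_2^{7/4}\bigr),
\]
whose leading term $x(M_1M_2)^{1/2}$ is \emph{not} dominated by any of the five displayed monomials when $M_1M_2<x^{1/4}$ (for instance $M_1=M_2=2$ gives $\asymp x$ versus a target of size $x^{15/16}$). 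So the ``final reconciliation'' you anticipate as a bookkeeping obstacle is in fact a genuine failure of the method in this configuration: the $(h,\ell)$-autocorrelation is too coarse because near-equal products $h\ell\approx h'\ell'$ are far too abundant.

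The paper runs the double large sieve with the \emph{other} split throughout, the one you mention only as a sub-case: $\mathscr X=\{2(\sqrt{m_1}-\sqrt{m_2})\sqrt h\}$ against $\mathscr Y=\{\sqrt\ell\}$. The $\ell$-autocorrelation $S_b$ is then a single-variable count handled by Lemma~\ref{iwaniec and sarkozy}. The entire weight of the argument shifts to the \emph{sextuple} spacing count
\[
  S_a=\mathscr B(M_1,M_2,H,L^{-1/2})
  =\#\bigl\{|(\sqrt{m_1}-\sqrt{m_2})\sqrt h-(\sqrt{\tilde m_1}-\sqrt{\tilde m_2})\sqrt{\tilde h}|\le L^{-1/2}\bigr\},
\]
and the key missing idea in your proposal is how to bound this. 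The paper devotes an entire section (Proposition in Section~3) to it: one proves two separate estimates for $\mathscr B(M_1,M_2,H,\Delta)$, one by fixing the $m$'s and spacing the ratios $(h/\tilde h)^{1/2}$ via a $\gcd$ decomposition (this uses Lemma~\ref{T(a,b)}), the other by fixing $h,\tilde h$ and reducing, via two applications of Lemma~\ref{(2.2) of Zhai}, to the Robert--Sargos quadruplet count of Lemma~\ref{Robert and Sargos}; the final bound is obtained by taking the geometric mean $\min(a,b)\ll(ab)^{1/2}$ of the two. It is this hybrid spacing bound, fed back into the large sieve, that produces exactly the five terms of the lemma; your decomposition via Lemma~\ref{(2.2) of Zhai} and Lemma~\ref{Robert and Sargos} on the $m$-side alone captures only half of this machinery.
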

\begin{proof}
Applying Lemma \ref{separate} we obtain 
\begin{equation}\label{exp SII}
  S_{II}\ll|{S_{II}}^{*}|+M_1M_2H\log x,
\end{equation}
where
\begin{equation*}
  {S_{II}}^{*}=\mathop{\sum_{m_1\sim M_1}\sum_{m_2\sim M_2}}_{\substack{m_1\neq m_2}}d(m_1)d(m_2)\sum_{h\sim H}{\xi_h}^{*}\sum_{\ell\sim L}{\eta_{\ell}}^{*}\mathbf{e}\left(2(\sqrt{m_1}-\sqrt{m_2})h^{1/2}{\ell}^{1/2}\right)
\end{equation*}
with $|{\xi_h}^{*}|,|{\eta_{\ell}}^{*}|\ll x^{\varepsilon}$.

Applying Lemma \ref{Double large sieve} with $\mathscr{X}=2({m_1}^{1/2}-{m_2}^{1/2})h^{1/2}$, $\mathscr{Y}=\ell^{1/2}$, we obtain
\begin{equation}\label{T * est1}
  {S_{II}}^{*}\ll{M_1}^{1/4}x^{1/4}(S_{a}S_{b})^{1/2},
\end{equation}
where
\begin{equation*}
\begin{aligned}
  S_{a}=&\sum_{|({m_1}^{1/2}-{m_2}^{1/2}){h}^{1/2}-({\tilde{m_1}}^{1/2}-{\tilde{m_2}}^{1/2}){\tilde{h}}^{1/2}|\leqslant L^{{-1/2}}}d(m_1)d(m_2)d(\tilde{m_1})d(\tilde{m_2}){\xi_h}^{*}{\xi_{\tilde{h}}}^{*},\\
  S_{b}=&\sum_{|\ell^{1/2}-{\tilde{\ell}}^{1/2}|\leqslant{M_1}^{-1/2}H^{-1/2}}{\eta_{\ell}}^{*}{\eta_{\tilde{\ell}}}^{*}.
\end{aligned}
\end{equation*}
By (\ref{mathscr B est}) with $\Delta=L^{-1/2}$, we easily obtain
\begin{equation}\label{S_a est}
\begin{aligned}
  x^{-\varepsilon}S_{a}&\ll\mathscr{B}(M_1,M_2,H,L^{-1/2})\\
  &\ll(M_1M_2)^{3/2}H^{3/2}+{M_1}^{15/8}{M_2}^{3/2}x^{-1/8}H^{3/2}+(M_1M_2)^{7/4}x^{-1/2}H^2.
\end{aligned}
\end{equation}
For $S_b$, using Lemma \ref{iwaniec and sarkozy} we have
\begin{equation}\label{S_b est}
\begin{aligned}
  x^{-\varepsilon}S_{b}&\ll \sum_{|\ell^{1/2}-{\tilde{\ell}}^{1/2}|\leqslant{M_1}^{-1/2}H^{-1/2}}1\\
  &\ll xH^{-1}+{M_1}^{-1/2}H^{-2}x^{3/2}.
\end{aligned}
\end{equation}
Thus combining (\ref{exp SII}), (\ref{T * est1}), (\ref{S_a est}), (\ref{S_b est}) and the condition $x^{1/4}\ll H\ll x^{1/2}$ we conclude that
\begin{equation}\label{T * est2}
\begin{aligned}
  x^{-\varepsilon}{S_{II}}^{*}&\ll x^{1/2}H^{1/2}{M_1}^{9/8}{M_2}^{7/8}+x^{3/4}H^{1/4}M_1{M_2}^{3/4}+H^{1/4}x^{11/16}{M_1}^{19/16}{M_2}^{3/4}\\
  &+x^{3/4}(M_1M_2)^{7/8}+xH^{-1/4}(M_1M_2)^{3/4}+x^{15/16}H^{-1/4}{M_1}^{15/16}{M_2}^{3/4}+x^{1/2}M_1M_2\\
  &\ll x^{3/4}{M_1}^{9/8}{M_2}^{7/8}+x^{7/8}M_1{M_2}^{3/4}+x^{13/16}{M_1}^{19/16}{M_2}^{3/4}+x^{15/16}(M_1M_2)^{3/4}\\
  &+x^{1/2}M_1M_2.
\end{aligned} 
\end{equation}
\end{proof}

\section{The spacing problem}

Let $M_1,M_2\geqslant1$, $H\geqslant1$. In this section $\Delta$ is a small real positive number. In this section we investigate the distributions of real numbers of type
\begin{equation*}
  t(h,m_1,m_2)=({m_1}^{\alpha}-{m_2}^{\alpha})h^{\beta}
\end{equation*}
with $0<\alpha,\beta<1$, $m_1\sim M_1$, $m_2\sim M_2$, $m_1\neq m_2$, $h\sim H$.

Let $\mathscr{B}(M_1, M_2, H, \Delta)$ denote the number of sixturplets $(m_1, \tilde{m_1}, m_2, \tilde{m_2}, h, \tilde{h})$ with $m_1, \tilde{m_1}\sim M_1$, $m_2, \tilde{m_2}\sim M_2$, $h, \tilde{h}\sim H$ satisfying
\begin{equation}\label{mathscr B}
  |t(h,m_1,m_2)-t(\tilde{h},\tilde{m_1},\tilde{m_2})|\leqslant\Delta
\end{equation}

Our aim is to prove the following:

\begin{proposition}
  We have
\begin{equation}\label{mathscr B est}
\begin{aligned}
  &\mathscr{B}(M_1, M_2, H, \Delta)\\
  &\ll(M_1M_2)^{3/2+\varepsilon}H^{3/2}+{M_1}^{2-\alpha/4+\varepsilon}{M_2}^{3/2+\varepsilon}\Delta^{1/4}H^{3/2-\beta/4}+(M_1M_2)^{2-\alpha/2+\varepsilon}H^{2-\beta}\Delta.
\end{aligned}
\end{equation}
\end{proposition}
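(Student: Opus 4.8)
\noindent\emph{Proof proposal.} The plan is to prove the bound by a direct counting argument, dissecting the sextuplets dyadically and then invoking the Cauchy--Schwarz type estimate for spacing counts (Lemma~\ref{(2.2) of Zhai}) together with the quadruplet bounds of Lemmas~\ref{Robert and Sargos}, \ref{iwaniec and sarkozy} and~\ref{T(a,b)}. First, by replacing $t(h,m_1,m_2)$ with $-t(h,m_1,m_2)$ when necessary, one may assume $m_1^\alpha>m_2^\alpha$ and $\tilde m_1^\alpha>\tilde m_2^\alpha$, so that all the quantities $t(h,m_1,m_2)$ are positive; by symmetry one may also assume $M_1\gtrsim M_2$. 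Writing $m_1^\alpha-m_2^\alpha\asymp V_1$ and $\tilde m_1^\alpha-\tilde m_2^\alpha\asymp V_2$ for parameters $V_1,V_2$ running over powers of $2$, and recording $t\asymp V_1H^\beta$, $\tilde t\asymp V_2H^\beta$, the condition $|t-\tilde t|\le\Delta$ forces either $V_1\asymp V_2$, or else both $V_1,V_2\ll\Delta H^{-\beta}$ (the unbalanced case). The basic input for each $V$ is the mean value theorem bound $P(V):=\#\{m_1\sim M_1,\,m_2\sim M_2:\,m_1^\alpha-m_2^\alpha\asymp V\}\ll M_2+VM_1^{1-\alpha}M_2$, together with $\sum_V P(V)\le M_1M_2$.

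In the balanced range $V_1\asymp V_2\asymp V$ I would fix $h$ and $\tilde h$ and apply Lemma~\ref{(2.2) of Zhai} to the four variables $m_1,m_2,\tilde m_1,\tilde m_2$, bounding the count in terms of $U(h)^{1/2}U(\tilde h)^{1/2}$ where $U(h)$ counts quadruples with $|m_1^\alpha-m_2^\alpha-m_1'^\alpha+m_2'^\alpha|\le\Delta h^{-\beta}$; a second application of Lemma~\ref{(2.2) of Zhai}, separating $\{m_1^\alpha-m_1'^\alpha\}$ from $\{m_2^\alpha-m_2'^\alpha\}$, reduces $U(h)$ to sums $\#\{|n_1^\alpha+n_2^\alpha-n_3^\alpha-n_4^\alpha|\le\Delta h^{-\beta}\}$ over a single dyadic block, which is exactly what Lemma~\ref{Robert and Sargos} handles. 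The summation over $h,\tilde h$ is then controlled by Lemma~\ref{iwaniec and sarkozy} (for $\beta=1/2$; in general by its mean value analogue), which bounds the number of $\tilde h$ compatible with a given $h$; one must subdivide further according to whether $V$ is small ($V\ll\Delta H^{1-\beta}$, so this number can be as large as $H$) or large (so it is only $\asymp H$ pairs $(h,\tilde h)$ in all, and each relevant value of $m_1^\alpha-m_2^\alpha$ is pinned to an interval of length $O(\Delta)$, which is where Lemma~\ref{T(a,b)} with $\beta=1/2$ enters to sum $|m_1^\alpha-m_2^\alpha|^{-1/2}$). Collecting the contributions, the unbalanced case and the small-$V$ part of the balanced case produce the two terms involving $\Delta^{1/4}$ and $\Delta$, and the large-$V$ part produces the main term $(M_1M_2)^{3/2+\varepsilon}H^{3/2}$.

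The step I expect to be the main obstacle is getting the exponent of $H$ in the first term down to $3/2$. The naive argument, which treats $h$ and $\tilde h$ as free and sums trivially over the pair $(h,\tilde h)$, yields only $(M_1M_2)^{1+\varepsilon}H^2$, which is too weak once $H$ exceeds $M_1M_2$; to do better one cannot peel off the $(m_1,m_2)$-, $(\tilde m_1,\tilde m_2)$- and $(h,\tilde h)$-blocks one at a time but must keep all six variables coupled through the near-diagonal analysis, using the constraint that ties $(\tilde m_1^\alpha-\tilde m_2^\alpha)\tilde h^\beta$ to $(m_1^\alpha-m_2^\alpha)h^\beta$, and it is precisely here that Lemmas~\ref{iwaniec and sarkozy} and~\ref{T(a,b)} and the interplay between the $m$- and $h$-counts must be exploited efficiently. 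A further, essentially bookkeeping, complication is that $M_1$ and $M_2$ need not be comparable, so every application of Lemma~\ref{Robert and Sargos} must be set up after matching the relevant ranges, and the degenerate case $M_1\gg M_2$ --- in which $m_1^\alpha-m_2^\alpha\asymp M_1^\alpha$ is essentially constant --- should be disposed of separately, where it is easily seen to be harmless.
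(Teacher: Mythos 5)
You have correctly diagnosed the central difficulty --- the naive sum over $(h,\tilde h)$ yields only $(M_1M_2)^{1+\varepsilon}H^2$, and the proposition needs $H^{3/2}$ --- but the route you sketch around it is not the paper's, and I do not see how it can close. The paper's idea is simpler than ``keeping all six variables coupled'': it derives \emph{two independent bounds} for $\mathscr{B}$ and then interpolates. For the first (the ``$h$-aspect''), it fixes the quadruple $(m_1,m_2,\tilde m_1,\tilde m_2)$ and counts pairs $(h,\tilde h)$ directly: grouping by $\gcd(h,\tilde h)=\mu$, the Farey points $h/\tilde h$ with that gcd are spaced $\gg\mu^2H^{-2}$, so the count for each $\mu$ is $\ll 1+\Delta(|\tilde m_1^{\alpha}-\tilde m_2^{\alpha}|H^{\beta})^{-1}H^2\mu^{-2}$; summing over $\mu\leqslant H$ and then over the $m$'s (using Lemma~\ref{T(a,b)} with exponent $\beta=1$, not $1/2$) gives $\mathscr{B}\ll H(M_1M_2)^2+\Delta(M_1M_2)^{2-\alpha/2+\varepsilon}H^{2-\beta}$. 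The second bound (the ``$m$-aspect'', which is the part your plan has) fixes $(h,\tilde h)$, applies Lemma~\ref{(2.2) of Zhai} twice and Lemma~\ref{Robert and Sargos}, giving $\mathscr{B}\ll(HM_1^{1+\varepsilon}+M_1^{2-\alpha/2+\varepsilon}\Delta^{1/2}H^{1-\beta/2})(HM_2^{1+\varepsilon}+\cdots)$. The exponent $3/2$ then drops out of the purely elementary $\min(a,b)\leqslant(ab)^{1/2}$: in particular $\min\bigl(H(M_1M_2)^2,\,H^2(M_1M_2)^{1+\varepsilon}\bigr)\leqslant(M_1M_2)^{3/2+\varepsilon}H^{3/2}$, and the cross term gives $M_1^{2-\alpha/4+\varepsilon}M_2^{3/2+\varepsilon}\Delta^{1/4}H^{3/2-\beta/4}$. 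So, contrary to what you assert, one \emph{can} peel off the blocks one at a time --- one just does it in two different orders and interpolates afterwards.

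The step in your plan that fails is the attempt to first fix $(h,\tilde h)$, apply Zhai and Robert--Sargos to the $m$'s, and \emph{then} use Lemma~\ref{iwaniec and sarkozy} to thin out the pairs $(h,\tilde h)$. Once the $m$-count for fixed $(h,\tilde h)$ has absorbed the constraint $|t-\tilde t|\leqslant\Delta$, there is nothing left with which to restrict $\tilde h$ given $h$; the dyadic localisation $V_1\asymp V_2\asymp V$ only pins $m_1^{\alpha}-m_2^{\alpha}$ to within a factor of $2$, which is far too coarse to play the role of a fixed ratio in the $(h,\tilde h)$-count. Moreover Lemma~\ref{iwaniec and sarkozy} is not used in this proposition at all --- it enters only later, in the proof of Lemma~\ref{Type II est}, to handle the $\ell$-sum $S_b$ --- and it addresses $|\sqrt{\ell}-\sqrt{\tilde\ell}|\leqslant\delta$ with a fixed $\delta$, not the $m$-dependent ratio condition you would need here. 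The dyadic decomposition in $V_1,V_2$ and the balanced/unbalanced split, while natural, are absent from the paper and unnecessary once the two-sided bound plus $\min\leqslant\sqrt{\cdot}$ interpolation is available.
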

\begin{proof}
Suppose $m_1,m_2,\tilde{m_1},\tilde{m_2}$ are fixed, without loss of generality we assume that $m_1>m_2$ and $\tilde{m_1}>\tilde{m_2}$. We denote the number of solutions of
\begin{equation}\label{mathscr B1}
  \left|\left(\frac{{m_1}^{\alpha}-{m_2}^{\alpha}}{{\tilde{m_1}}^{\alpha}-{\tilde{m_2}}^{\alpha}}\right)
  -\left(\frac{h}{\tilde{h}}\right)^{\beta}\right|\leq\frac{\Delta}{|{\tilde{m_1}}^{\alpha}-{\tilde{m_2}}^{\alpha}|H^{\beta}}
\end{equation}
by $\mathscr{B}(m_1, m_2,\tilde{m_1}, \tilde{m_2}, H, \Delta)$. Let $\mathscr{B}(m_1, m_2,\tilde{m_1}, \tilde{m_2}, H, \Delta,\mu)$ be the number of solutions of $h,\tilde{h}$ to (\ref{mathscr B1}) which additionally satisfying $\gcd(h,\tilde{h})=\mu$.

We divide the solutions of $h$, $\tilde{h}$ into classes each one having fixed values $\gcd(h,\tilde{h})=\mu$, the points $\left(\frac{h}{\tilde{h}}\right)$ are spaced by $c(\beta)H^2\mu^{-2}$, where $c(\beta)$ is a constant. Then 
\begin{equation*}
  \mathscr{B}(m_1, m_2,\tilde{m_1}, \tilde{m_2}, H, \Delta,\mu)\ll1+\Delta(|{\tilde{m_1}}^{\alpha}-{\tilde{m_2}}^{\alpha}|H^{\beta})^{-1}H^2\mu^{-2}
\end{equation*}

 Let $\mathscr{B}(M_1, M_2, H, \Delta,\mu)$ be the number of solutions to (\ref{mathscr B}) which additionally satisfying $\gcd(h,\tilde{h})=\mu$. Summing over $m_1$, $m_2$, $\tilde{m_1}$, $\tilde{m_2}$ we have
\begin{equation*}
\begin{aligned}
  \mathscr{B}(M_1, M_2, H, \Delta,\mu)&\ll \sum_{\substack{m_1,\tilde{m_1}\sim M_1\\m_2,\tilde{m_2}\sim M_2}}\mathscr{B}(m_1, m_2,\tilde{m_1}, \tilde{m_2}, H, \Delta,\mu)\\
  &\ll\sum_{\substack{m_1,\tilde{m_1}\sim M_1\\m_2,\tilde{m_2}\sim M_2}}(1+\Delta(|{\tilde{m_1}}^{\alpha}-{\tilde{m_2}}^{\alpha}|)^{-1}H^{2-\beta}\mu^{-2})\\
  &\ll{M_1}^2{M_2}^2+\Delta M_1M_2 H^{2-\beta}\mu^{-2}\sum_{\substack{\tilde{m_1}\sim M_1\\\tilde{m_2}\sim M_2}}(|{\tilde{m_1}}^{\alpha}-{\tilde{m_2}}^{\alpha}|)^{-1}.
\end{aligned} 
\end{equation*}
Using Lemma \ref{T(a,b)} we conclude that
\begin{equation*}
\begin{aligned}
  \mathscr{B}(M_1, M_2, H, \Delta,\mu)\ll{M_1}^2{M_2}^2+\Delta(M_1M_2)^{2-\alpha/2}H^{2-\beta}\mu^{-2}\log{M_1}\log{M_2}.
\end{aligned} 
\end{equation*}
Summing over $\mu$ we have
\begin{equation}\label{h-aspect}
\begin{aligned}
  \mathscr{B}(M_1, M_2, H, \Delta)\ll H{M_1}^2{M_2}^2+\Delta(M_1M_2)^{2-\alpha/2}H^{2-\beta}\log{M_1}\log{M_2}.
\end{aligned} 
\end{equation}
On the other hand, for fixed $h$, $\tilde{h}$, we denote the number of solutions of $m_1,\tilde{m_1},m_2,\tilde{m_2}$ to
\begin{equation}\label{mathscr B2}
  \left|\left(\frac{h}{\tilde{h}}\right)^{\beta}({m_1}^{\alpha}-{m_2}^{\alpha})-{\tilde{m_1}}^{\alpha}+{\tilde{m_2}}^{\alpha}\right|\leq\frac{\Delta}{H^{\beta}}
\end{equation}
by $\mathscr{B}(M_1, M_2, h, \tilde{h}, H, \Delta)$. Then we can obtain that $\mathscr{B}(M_1, M_2, H, \Delta)$ is bounded by
\begin{equation*}
   \sum_{h,\tilde{h}\sim H}\mathscr{B}(M_1, M_2, h, \tilde{h}, H, \Delta).
\end{equation*}

Applying Lemma \ref{(2.2) of Zhai} to the sequences $\{a_r\}=\{(h/\tilde{h})^{\beta}{m_1}^{\alpha}-{\tilde{m_1}}^{\alpha}\}$ and $\{b_s\}=\{(h/\tilde{h})^{\beta}{m_2}^{\alpha}-{\tilde{m_2}}^{\alpha}\}$, we get
\begin{equation*}
\begin{aligned}
  &\mathscr{B}(M_1, M_2, h, \tilde{h}, H, \Delta)\\
  &\qquad\leqslant3\left(\#\left\{(m_1,\tilde{m_1},{m_1}',{\tilde{m_1}'}):
  \left|\left(\frac{h}{\tilde{h}}\right)^{\beta}{m_1}^{\alpha}-{\tilde{m_1}}^{\alpha}-\left(\frac{h}{\tilde{h}}\right)^{\beta}{{m_1}'}^{\alpha}+{\tilde{m_1}{'}}^{\alpha}\right|
  \leqslant\frac{\Delta}{H^{\beta}}\right\}\right)^{1/2}\\
  &\qquad\times\left(\#\left\{(m_2,\tilde{m_2},{m_2}',{\tilde{m_2}'}):
  \left|\left(\frac{h}{\tilde{h}}\right)^{\beta}{m_2}^{\alpha}-{\tilde{m_2}}^{\alpha}-\left(\frac{h}{\tilde{h}}\right)^{\beta}{{m_2}'}^{\alpha}+{\tilde{m_2}{'}}^{\alpha}\right|
  \leqslant\frac{\Delta}{H^{\beta}}\right\}\right)^{1/2}.
\end{aligned} 
\end{equation*}
Applying Lemma \ref{(2.2) of Zhai} again to the sequences $\{a_{r_1}\}=\{(h/\tilde{h})^{\beta}({m_1}^{\alpha}-{{m_1}{'}^{\alpha}})\}$,
$\{b_{s_1}\}=\{{\tilde{m_1}}^{\alpha}-{\tilde{m_1}{'}}^{\alpha}\}$, and $\{a_{r_2}\}=\{(h/\tilde{h})^{\beta}({m_2}^{\alpha}-{{m_2}{'}^{\alpha}})\}$, $\{b_{s_2}\}=\{{\tilde{m_2}}^{\alpha}-{\tilde{m_2}{'}}^{\alpha}\}$, respectively, we get 
\begin{equation*}
\begin{aligned}
  \mathscr{B}(M_1, M_2, h, \tilde{h}, H, \Delta)
  \ll\mathcal{N}\left(M_1,\frac{\Delta}{H^{\beta}{M_1}^{\alpha}}\right)^{1/2}\mathcal{N}\left(M_2,\frac{\Delta}{H^{\beta}{M_2}^{\alpha}}\right)^{1/2},
\end{aligned}
\end{equation*}
where $\mathcal{N}$ is defined in Lemma \ref{Robert and Sargos}. Summing over $h$ and $\tilde{h}$ we have
\begin{equation}\label{m-aspect}
\begin{aligned}
  \mathscr{B}(M_1, M_2, H, \Delta)&\ll\sum_{h,\tilde{h}\sim H}\mathscr{B}(M_1, M_2, h, \tilde{h}, H, \Delta)\\
  &\ll(H{M_1}^{1+\varepsilon}+{M_1}^{2-\alpha/2+\varepsilon}\Delta^{1/2}H^{1-\beta/2})(H{M_2}^{1+\varepsilon}+{M_2}^{2-\alpha/2+\varepsilon}\Delta^{1/2}H^{1-\beta/2})
\end{aligned}
\end{equation}
Above all, combining (\ref{h-aspect}) and (\ref{m-aspect}) we get
\begin{equation*}
\begin{aligned}
  &\mathscr{B}(M_1, M_2, H, \Delta)\\
  \ll&\min({M_1}^2{M_2}^2+\Delta(M_1M_2)^{2-\alpha/2}H^{2-\beta}\log{M_1}\log{M_2},\\
  &(H{M_1}^{1+\varepsilon}+{M_1}^{2-\alpha/2+\varepsilon}\Delta^{1/2}H^{1-\beta/2})(H{M_2}^{1+\varepsilon}+{M_2}^{2-\alpha/2+\varepsilon}\Delta^{1/2}H^{1-\beta/2}))\\
  \ll&\min(H{M_1}^2{M_2}^2,H^2(M_1M_2)^{1+\varepsilon}+{M_1}^{2-\alpha/2+\varepsilon}{M_2}^{1+\varepsilon}\Delta^{1/2}H^{2-\beta/2})+\Delta(M_1M_2)^{2-\alpha/2+\varepsilon}H^{2-\beta}\\
  \ll&(M_1M_2)^{3/2+\varepsilon}H^{3/2}+{M_1}^{2-\alpha/4+\varepsilon}{M_2}^{3/2+\varepsilon}\Delta^{1/4}H^{3/2-\beta/4}+(M_1M_2)^{2-\alpha/2+\varepsilon}H^{2-\beta}\Delta,
\end{aligned}
\end{equation*}
where we use $\min(a,b)\ll (ab)^{1/2}$.
\end{proof}

\section{Proof of the Theorem}

We are first going to estimate $\sum_{x<p\leqslant2x}\Delta^2(p)$. Using notations in Lemma \ref{voronoi}, let $M$ be a real number such that $1\ll M\ll x$, one has
\begin{equation}\label{deltap2 exp}
\begin{aligned}
  \sum_{x<p\leqslant2x}\Delta^2(p)
  =\sum_{x<p\leqslant2x}(\delta_1(p,M)^2+2\delta_1(p,M)\delta_2(p,M)+\delta_2(p,M)^2),
\end{aligned}
\end{equation}
denoted by $S_{11},S_{12},S_{22}$, respectively.

For $S_{22}$, one has
\begin{equation*}
\begin{aligned}
  S_{22}&=\sum_{x<p\leqslant2x}\int_{p-1}^{p}\delta_2(p,M)^2dt\\
  &=\sum_{x<p\leqslant2x}\left[\int_{p-1}^{p}\delta_2(t,M)^2dt-\int_{p-1}^{p}(\delta_2(t,M)^2-\delta_2(p,M)^2)dt\right]\\
  &\leqslant\int_{x}^{2x}\delta_2(t,M)^2dt+\sum_{x<p\leqslant2x}\int_{p-1}^{p}(\delta_2(t,M)^2-\delta_2(p,M)^2)dt\\
  &\ll\frac{x^{3/2}\log^3x}{{M}^{1/2}}+x\log^4x+\sum_{x<p\leqslant2x}\int_{p-1}^{p}(\delta_2(t,M)^2-\delta_2(p,M)^2)dt.
\end{aligned}
\end{equation*}
The latter term can be transformed into $E_1-E_2$, where
\begin{equation*}
\begin{aligned}
  E_1&=\sum_{x<p\leqslant2x}\int_{p-1}^{p}(\delta_2(t,M)+\delta_2(p,M))(\Delta(t)-\Delta(p))dt,\\
  E_2&=\sum_{x<p\leqslant2x}\int_{p-1}^{p}(\delta_2(t,M)+\delta_2(p,M))(\delta_1(t,M)-\delta_1(p,M))dt.
\end{aligned}
\end{equation*}
For $E_2$, by the expression of $\delta_1$ and Lagrange's mean value theorem we obtain $\delta_1(t,M)-\delta_1(p,M)\ll t^{-1/4}{M}^{3/4}$, and by Lemma \ref{voronoi} we have $\delta_2(t,M)+\delta_2(p,M)\ll t^{1/2+\varepsilon}{M}^{-1/2}$, thus 
\begin{equation*}
  E_2\ll\int_{x}^{2x}t^{1/4+\varepsilon}{M}^{1/4}dt\ll x^{5/4+\varepsilon}{M}^{1/4}.
\end{equation*}
For $E_1$, one has
\begin{equation*}
\begin{aligned}
  E_1&\ll\int_{x}^{2x}t^{1/2}{M}^{-1/2}\max_{0<v\leqslant1}|\Delta(t)-\Delta(t+v)|dt\\
  &\ll \frac{x^{1/2+\varepsilon}}{{M}^{1/2}}\int_{x}^{2x}\max_{0<v\leqslant1}|\Delta(t)-\Delta(t+v)|dt\\
  &\ll x^{3/2+\varepsilon}{M}^{-1/2},
\end{aligned}
\end{equation*}
where we use Lemma \ref{Heath and Tsang}. Thus we conclude that 
\begin{equation}\label{S_22 est}
  S_{22}\ll x^{3/2+\varepsilon}M^{-1/2}+x^{5/4+\varepsilon}{M}^{1/4}.
\end{equation}

Next we turn to evaluate $S_{11}$. Using $\cos{\alpha}\cos{\beta}=\frac{1}{2}(\cos(\alpha-\beta)+\cos(\alpha+\beta))$ we have
\begin{equation}\label{S11 exp}
\begin{aligned}
  &\sum_{x<p\leqslant2x}\delta_1(p,M)^2\\
  =&\sum_{x<p\leqslant2x}\frac{p^{1/2}}{2\pi^2}\sum_{m_1,m_2\leqslant M}\frac{d(m_1)d(m_2)}{(m_1m_2)^{3/4}}
  \cos\left(4\pi\sqrt{m_1p}-\frac{\pi}{4}\right)\cos\left(4\pi\sqrt{m_2p}-\frac{\pi}{4}\right)\\
  =&\frac{1}{4\pi^2}\sum_{x<p\leqslant2x}p^{1/2}\sum_{m_1,m_2\leqslant M}\frac{d(m_1)d(m_2)}{(m_1m_2)^{3/4}}
  \left(\cos\left(4\pi(\sqrt{m_1}-\sqrt{m_2})\sqrt{p}\right)+\sin\left(4\pi(\sqrt{m_1}+\sqrt{m_2})\sqrt{p}\right)\right)\\
  :=&\frac{1}{4\pi^2}\sum_{x<p\leqslant2x}p^{1/2}\sum_{m\leqslant M}\frac{d^2(m)}{m^{3/2}}+R_1+R_2,
\end{aligned}
\end{equation}
where
\begin{equation*}
\begin{aligned}
R_1&=\frac{1}{4\pi^2}\sum_{\substack{m_1,m_2\leqslant M\\m_1\neq m_2}}\frac{d(m_1)d(m_2)}{(m_1m_2)^{3/4}}\sum_{x<p\leqslant2x}p^{1/2}\cos\left(4\pi(\sqrt{m_1}-\sqrt{m_2})\sqrt{p}\right),\\
R_2&=\frac{1}{4\pi^2}\sum_{m_1,m_2\leqslant M}\frac{d(m_1)d(m_2)}{(m_1m_2)^{3/4}}\sum_{x<p\leqslant2x}p^{1/2}\sin\left(4\pi(\sqrt{m_1}+\sqrt{m_2})\sqrt{p}\right).
\end{aligned}
\end{equation*}
\subsection{Estimate of \texorpdfstring{$R_1$}{}}
\
\newline \indent  One can see $R_1$ can be written as linear combination of $O(\log^2{M})$ sums of the form
\begin{equation*}
\begin{aligned}
  &\frac{1}{4\pi^2}\sum_{\substack{m_1\sim M_1,m_2\sim M_2\\m_1\neq m_2}}\frac{d(m_1)d(m_2)}{(m_1m_2)^{3/4}}\sum_{x<p\leqslant2x}p^{1/2}\cos\left(4\pi(\sqrt{m_1}-\sqrt{m_2})\sqrt{p}\right)\\
  =&\frac{1}{8\pi^2}\sum_{\substack{m_1\sim M_1,m_2\sim M_2\\m_1\neq m_2}}\frac{d(m_1)d(m_2)}{(m_1m_2)^{3/4}}
  \sum_{x<p\leqslant2x}p^{1/2}(\mathbf{e}(2(\sqrt{m_1}-\sqrt{m_2})\sqrt{p})+\mathbf{e}(2(\sqrt{m_2}-\sqrt{m_1})\sqrt{p})),
\end{aligned}
\end{equation*}
by using $\cos2\pi\alpha=(\mathbf{e}(\alpha)+\mathbf{e}(-\alpha))/2$, and where $1\leqslant M_1,M_2\leqslant M$. Since $m_1,m_2$ are symmetric in the above sum, we are just going to estimate
\begin{equation*}
\mathcal{S}_1=\frac{1}{8\pi^2}\sum_{\substack{m_1\sim M_1,m_2\sim M_2\\m_1\neq m_2}}\frac{d(m_1)d(m_2)}{(m_1m_2)^{3/4}}
  \sum_{x<p\leqslant2x}p^{1/2}\:\mathbf{e}(2(\sqrt{m_1}-\sqrt{m_2})\sqrt{p}).
\end{equation*}
Trivially we have
\begin{equation}\label{S_1}
  \mathcal{S}_1\ll\frac{x^{1/2}}{(M_1M_2)^{3/4}}|{\mathcal{S}_1}^{*}|,
\end{equation}
where
\begin{equation*}
  {\mathcal{S}_1}^{*}=\sum_{\substack{m_1\sim M_1,m_2\sim M_2\\m_1\neq m_2}}d(m_1)d(m_2)
  \sum_{x<p\leqslant2x}\:\mathbf{e}(2(\sqrt{m_1}-\sqrt{m_2})\sqrt{p}).
\end{equation*}
It follows from partial summation that
\begin{equation}\label{S1*}
\begin{aligned}
{\mathcal{S}_1}^{*}&=\sum_{\substack{m_1\sim M_1,m_2\sim M_2\\m_1\neq m_2}}d(m_1)d(m_2)
  \int_{x}^{2x}\frac{d\mathcal{A}(u)}{\log u}\\
  &=\sum_{\substack{m_1\sim M_1,m_2\sim M_2\\m_1\neq m_2}}d(m_1)d(m_2)
  \left(\frac{\mathcal{A}(u)}{\log u}\bigg|_{x}^{2x}-\int_{x}^{2x}\frac{\mathcal{A}(u)}{u\log^2u}du\right),
\end{aligned}
\end{equation}
where 
\begin{equation*}
  \mathcal{A}(u)=\sum_{p\leqslant u}\log p\:\mathbf{e}(2(\sqrt{m_1}-\sqrt{m_2})\sqrt{p}).
\end{equation*}
Moreover, we easily obtain
\begin{equation*}
  \mathcal{A}(u)=\sum_{n\leqslant u}\Lambda(n)\:\mathbf{e}(2(\sqrt{m_1}-\sqrt{m_2})\sqrt{n})+O(u^{1/2}).
\end{equation*}

By a splitting argument we only need to give the upper bound estimate of the following sum
\begin{equation}\label{exp sum in this paper}
\sum_{\substack{m_1\sim M_1,m_2\sim M_2\\m_1\neq m_2}}d(m_1)d(m_2)\sum_{n\sim x}\Lambda(n)\:\mathbf{e}(2(\sqrt{m_1}-\sqrt{m_2})\sqrt{n}).
\end{equation}
After using Heath-Brown's identity, i.e. Lemma \ref{H-B identity} with $k=3$, one can see that (\ref{exp sum in this paper}) can be written as linear combination of $O(\log^6x)$ sums, each of which is of the form
\begin{equation}\label{sum after H-B}
\begin{aligned}
  \mathcal{T}^{*}:=&\sum_{\substack{m_1\sim M_1,m_2\sim M_2\\m_1\neq m_2}}d(m_1)d(m_2){\sum_{n_1\sim N_1}}\cdots{\sum_{n_6\sim N_6}}(\log n_1)\mu(n_4)\mu(n_5)\mu(n_6)\\
  &\times\mathbf{e}(2(\sqrt{m_1}-\sqrt{m_2})\sqrt{n_1\cdots n_6}),
\end{aligned}
\end{equation}
where $N_1\cdots N_6\asymp x$; $2N_i\leqslant(2x)^{1/3}$, $i=4,5,6$ and some $n_i$ may only take value $1$. Therefore, it is sufficient for us to estimate for each $\mathcal{T}^{*}$ defined as in
(\ref{sum after H-B}). Next, we will consider four cases.
\subsubsection*{Case 1}
\indent If there exists an $N_j$ such that $N_j\geqslant x^{3/4}$, then we must have $j\leqslant3$ for the fact that

\qquad\;\;$N_j\ll x^{1/3}$ with $j=4,5,6$. Let
\begin{equation*}
  h=\prod_{\substack{1\leqslant i\leqslant6\\i\neq j}}n_i,\qquad \ell=n_j,\qquad H=\prod_{\substack{1\leqslant i\leqslant6\\i\neq j}}N_i,\qquad L=N_j.
\end{equation*}
In this case, we can see that $\mathcal{T}^{*}$ is a sum of "Type I" satisfying $H\ll x^{1/4}$. By Lemma \ref{Type I est}, we have
\begin{equation*}
  x^{-\varepsilon}\cdot\mathcal{T}^{*}\ll x^{1/2}{M_1}^{5/4}M_2+x^{3/4}(M_1M_2)^{7/8}.
\end{equation*}
\subsubsection*{Case 2}
\indent If there exists an $N_j$ such that $x^{1/2}\leqslant N_j< x^{3/4}$, then we take
\begin{equation*}
  h=\prod_{\substack{1\leqslant i\leqslant6\\i\neq j}}n_i,\qquad \ell=n_j,\qquad H=\prod_{\substack{1\leqslant i\leqslant6\\i\neq j}}N_i,\qquad L=N_j.
\end{equation*}
Thus, $\mathcal{T}^{*}$ is a sum of "Type II" satisfying $x^{1/4}\ll H\ll x^{1/2}$. By Lemma \ref{Type II est}, we have 
\begin{equation*}
\begin{aligned}
  x^{-\varepsilon}\cdot\mathcal{T}^{*}&\ll x^{3/4}{M_1}^{9/8}{M_2}^{7/8}+x^{7/8}M_1{M_2}^{3/4}+x^{13/16}{M_1}^{19/16}{M_2}^{3/4}+x^{15/16}(M_1M_2)^{3/4}\\
  &+x^{1/2}M_1M_2.
\end{aligned}
\end{equation*}
\subsubsection*{Case 3}
\indent If there exists an $N_j$ such that $x^{1/4}\leqslant N_j< x^{1/2}$, then we take
\begin{equation*}
  h=n_j,\qquad \ell=\prod_{\substack{1\leqslant i\leqslant6\\i\neq j}}n_i,\qquad H=N_j,\qquad L=\prod_{\substack{1\leqslant i\leqslant6\\i\neq j}}N_i.
\end{equation*}
Thus, $\mathcal{T}^{*}$ is a sum of "Type II" satisfying $x^{1/4}\ll H\ll x^{1/2}$. By Lemma \ref{Type II est}, we have 
\begin{equation*}
\begin{aligned}
  x^{-\varepsilon}\cdot\mathcal{T}^{*}&\ll x^{3/4}{M_1}^{9/8}{M_2}^{7/8}+x^{7/8}M_1{M_2}^{3/4}+x^{13/16}{M_1}^{19/16}{M_2}^{3/4}+x^{15/16}(M_1M_2)^{3/4}\\
  &+x^{1/2}M_1M_2.
\end{aligned}
\end{equation*}
\subsubsection*{Case 4} 
\indent If $N_j<x^{1/4}(j=1,2,3,4,5,6)$, without loss of generality, we assume that

\qquad\;\;$N_1\geqslant N_2\geqslant\cdots\geqslant N_6$. Let $r$ denote the natural number $j$ such that 
\begin{equation*}
  N_1N_2\cdots N_{j-1}<x^{1/4},\qquad N_1N_2\cdots N_j\geqslant x^{1/4}.
\end{equation*}
Since $N_1<x^{1/4}$ and $N_6<x^{1/4}$, then $2\leqslant r\leqslant5$. Thus we have
\begin{equation*}
  x^{1/4}\leqslant N_1N_2\cdots N_r=(N_1N_2\cdots N_{r-1})\cdot N_r<x^{1/4}\cdot x^{1/4}=x^{1/2}.
\end{equation*}
Let
\begin{equation*}
  h=\prod_{i=1}^{r}n_i,\qquad \ell=\prod_{i=r+1}^{6}n_i,\qquad H=\prod_{i=1}^{r}N_i,\qquad L=\prod_{i=r+1}^{6}N_i.
\end{equation*}
At this time, $\mathcal{T}^{*}$ is a sum of "Type II" satisfying $x^{1/4}\ll H\ll x^{1/2}$. By Lemma \ref{Type II est}, we have 
\begin{equation*}
\begin{aligned}
  x^{-\varepsilon}\cdot\mathcal{T}^{*}&\ll x^{3/4}{M_1}^{9/8}{M_2}^{7/8}+x^{7/8}M_1{M_2}^{3/4}+x^{13/16}{M_1}^{19/16}{M_2}^{3/4}+x^{15/16}(M_1M_2)^{3/4}\\
  &+x^{1/2}M_1M_2.
\end{aligned}
\end{equation*}
Combining the above four cases, we derive that 
\begin{equation*}
\begin{aligned}
  x^{-\varepsilon}\cdot\mathcal{T}^{*}&\ll x^{3/4}{M_1}^{9/8}{M_2}^{7/8}+x^{7/8}M_1{M_2}^{3/4}+x^{13/16}{M_1}^{19/16}{M_2}^{3/4}+x^{15/16}(M_1M_2)^{3/4}\\
  &+x^{1/2}{M_1}^{5/4}M_2,
\end{aligned}
\end{equation*}
which combined with (\ref{S_1}) and (\ref{S1*}) yields
\begin{equation*}
\begin{aligned}
  x^{-\varepsilon}\mathcal{S}_1&\ll x^{5/4}{M_1}^{3/8}{M_2}^{1/8}+x^{11/8}{M_1}^{1/4}+x^{21/16}{M_1}^{7/16}+x^{23/16}+x^{11/8}{M_1}^{3/16}+x{M_1}^{1/2}{M_2}^{1/4}.
\end{aligned}
\end{equation*}
Thus we obtain
\begin{equation}\label{R1 est}
\begin{aligned}
  x^{-\varepsilon}R_1\ll x^{5/4}{M_1}^{3/8}{M_2}^{1/8}+x^{11/8}{M_1}^{1/4}+x^{21/16}{M_1}^{7/16}+x^{23/16}+x^{11/8}{M_1}^{3/16}+x{M_1}^{1/2}{M_2}^{1/4}.
\end{aligned}
\end{equation}

\subsection{Estimate of \texorpdfstring{$R_2$}{}}
\
\newline \indent By the same argument on $R_1$, we derive that
\begin{equation}\label{R2}
  R_2\ll\frac{x^{1/2}}{(M_1M_2)^{3/4}}|{\mathcal{S}_2}^{*}|,
\end{equation}
where
\begin{equation}\label{S2*}
  {\mathcal{S}_2}^{*}=\sum_{\substack{m_1\sim M_1,m_2\sim M_2\\m_1\neq m_2}}d(m_1)d(m_2)
  \sum_{x<n\leqslant2x}\Lambda(n)\:\mathbf{e}(2(\sqrt{m_1}+\sqrt{m_2})\sqrt{n}).
\end{equation}
Using Lemma \ref{sum to int}, Lemma \ref{first derivative} and  the Cauchy-Schwarz's inequality, one has
\begin{equation*}
\begin{aligned}
  &\sum_{x<n\leqslant2x}\Lambda(n)\:\mathbf{e}(2(\sqrt{m_1}+\sqrt{m_2})\sqrt{n})\\
  &\ll\left(\sum_{x<n\leqslant2x}\Lambda^2(n)\right)^{1/2}\left(\sum_{x<n\leqslant2x}\:\mathbf{e}(4(\sqrt{m_1}+\sqrt{m_2})\sqrt{n})\right)^{1/2}\\
  &\ll x^{1/2+\varepsilon}\left(\int_{x}^{2x}\mathbf{e}(4(\sqrt{m_1}+\sqrt{m_2})\sqrt{t})dt\right)^{1/2}\\
  &\ll x^{3/4+\varepsilon}(m_1m_2)^{-1/8},
\end{aligned}
\end{equation*}
which combined with (\ref{S2*}) yields
\begin{equation}\label{R2 est}
\begin{aligned}
  R_2&\ll\frac{x^{5/4+\varepsilon}}{(M_1M_2)^{3/4}}\sum_{\substack{m_1\sim M_1,m_2\sim M_2\\m_1\neq m_2}}d(m_1)d(m_2)(m_1m_2)^{-1/8}\\
  &\ll x^{5/4+\varepsilon}(M_1M_2)^{1/8}.
\end{aligned}
\end{equation}
Above all, since $M_1,M_2$ run over $1\leqslant M_1,M_2\leqslant M$, $1\ll M\ll x$, by (\ref{S11 exp}), (\ref{R1 est}), (\ref{R2 est}), after eliminating lower order terms, we conclude that 
\begin{equation}\label{S11 est}
\begin{aligned}
  S_{11}&=\frac{1}{4\pi^2}\sum_{x<p\leqslant2x}p^{1/2}\left(\sum_{m=1}^{\infty}\frac{d^2(m)}{m^{3/2}}-\sum_{m>M}\frac{d^2(m)}{m^{3/2}}\right)+O(x^{\varepsilon}(x^{5/4}{M_1}^{3/8}{M_2}^{1/8}+x^{11/8}{M_1}^{1/4}\\
  &+x^{21/16}{M_1}^{7/16}+x^{23/16}+x^{11/8}{M_1}^{3/16}+x{M_1}^{1/2}{M_2}^{1/4}))\\
  &=\frac{\mathcal{C}}{4\pi^2}\sum_{x<p\leqslant2x}p^{1/2}+O(x^{\varepsilon}(x^{3/2}M^{-1/2}+x^{5/4}{M_1}^{3/8}{M_2}^{1/8}+x^{11/8}{M_1}^{1/4}+x^{21/16}{M_1}^{7/16}\\
  &+x^{23/16}+x^{11/8}{M_1}^{3/16}+x{M_1}^{1/2}{M_2}^{1/4})),\\
  &=\frac{\mathcal{C}}{4\pi^2}\sum_{x<p\leqslant2x}p^{1/2}+O(x^{\varepsilon}(x^{3/2}M^{-1/2}+x^{5/4}M^{1/2}+x^{21/16}{M}^{7/16}+x^{23/16}+x^{11/8}{M}^{3/16})),
\end{aligned}
\end{equation}
where we use the partial summation and $\mathcal{C}$ denotes $\sum_{m=1}^{\infty}\frac{d^2(m)}{m^{3/2}}$.

It remains to estimate $S_{12}$, suppose $M\ll x^{3/7}$, then by (\ref{S_22 est}), (\ref{S11 est}) and the Cauchy-Schwarz's inequality we have 
\begin{equation}\label{S12 est}
  S_{12}\ll (S_{11})^{1/2}(S_{22})^{1/2}\ll x^{3/2+\varepsilon}M^{-1/4}.
\end{equation}
Since $1\ll M\ll x^{3/7}$, combining (\ref{deltap2 exp}), (\ref{S_22 est}), (\ref{S11 est}), (\ref{S12 est}), taking $M=x^{1/4}$, we obtain
\begin{equation*}
  \sum_{x<p\leqslant2x}\Delta^2(p)=\frac{\mathcal{C}}{4\pi^2}\sum_{x<p\leqslant2x}p^{1/2}+O(x^{23/16+\varepsilon}).
\end{equation*}
Thus replacing $x$ by $x/2$, $x/2^2$, and so on, and adding up all the results, we obtain
\begin{equation}\label{delta2p est}
\begin{aligned}
  \sum_{p\leqslant x}\Delta^2(p)&=\frac{\mathcal{C}}{4\pi^2}\sum_{p\leqslant x}p^{1/2}+O(x^{23/16+\varepsilon}).
\end{aligned}
\end{equation}
Thus we have completed the proof of the Theorem.
\section*{Acknowledgement}

The authors would like to appreciate the referee for his/her patience in refereeing this paper.
This work is supported by the Beijing Natural Science Foundation(Grant No.1242003), and the National Natural Science Foundation of China (Grant No.11971476).

\bibliographystyle{plain}
\bibliography{reference.bib}

\begin{thebibliography}{10}

\bibitem{MR1544568}
Harald Cram\'{e}r.
\newblock \"{U}ber zwei {S}\"{a}tze des {H}errn {G}. {H}. {H}ardy.
\newblock {\em Math. Z.}, 15(1):201--210, 1922.

\bibitem{MR1027058}
\'{E}tienne Fouvry and Henryk Iwaniec.
\newblock Exponential sums with monomials.
\newblock {\em J. Number Theory}, 33(3):311--333, 1989.

\bibitem{MR2176479}
Jun Furuya.
\newblock On the average orders of the error term in the {D}irichlet divisor
  problem.
\newblock {\em J. Number Theory}, 115(1):1--26, 2005.

\bibitem{MR1576556}
G.~H. Hardy.
\newblock The {A}verage {O}rder of the {A}rithmetical {F}unctions {P}(x) and
  delta(x).
\newblock {\em Proc. London Math. Soc. (2)}, 15:192--213, 1916.

\bibitem{MR0678676}
D.~R. Heath-Brown.
\newblock Prime numbers in short intervals and a generalized {V}aughan
  identity.
\newblock {\em Canadian J. Math.}, 34(6):1365--1377, 1982.

\bibitem{MR1295953}
D.~R. Heath-Brown and K.~Tsang.
\newblock Sign changes of {$E(T)$}, {$\Delta(x)$}, and {$P(x)$}.
\newblock {\em J. Number Theory}, 49(1):73--83, 1994.

\bibitem{MR1199067}
M.~N. Huxley.
\newblock Exponential sums and lattice points. {II}.
\newblock {\em Proc. London Math. Soc. (3)}, 66(2):279--301, 1993.

\bibitem{MR2005876}
M.~N. Huxley.
\newblock Exponential sums and lattice points. {III}.
\newblock {\em Proc. London Math. Soc. (3)}, 87(3):591--609, 2003.

\bibitem{MR0792089}
Aleksandar Ivi\'{c}.
\newblock {\em The {R}iemann zeta-function}.
\newblock A Wiley-Interscience Publication. John Wiley \& Sons, Inc., New York,
  1985.
\newblock The theory of the Riemann zeta-function with applications.

\bibitem{MR2061214}
Henryk Iwaniec and Emmanuel Kowalski.
\newblock {\em Analytic number theory}, volume~53 of {\em American Mathematical
  Society Colloquium Publications}.
\newblock American Mathematical Society, Providence, RI, 2004.

\bibitem{MR0883536}
Henryk Iwaniec and Andr\'{a}s S\'{a}rk\"{o}zy.
\newblock On a multiplicative hybrid problem.
\newblock {\em J. Number Theory}, 26(1):89--95, 1987.

\bibitem{MR1215269}
Anatolij~A. Karatsuba.
\newblock {\em Basic analytic number theory}.
\newblock Springer-Verlag, Berlin, russian edition, 1993.

\bibitem{MR0644943}
G.~Kolesnik.
\newblock On the order of {$\zeta ({\frac{1}{2}}+it)$} and {$\Delta (R)$}.
\newblock {\em Pacific J. Math.}, 98(1):107--122, 1982.

\bibitem{MR2475967}
Yuk-Kam Lau and Kai-Man Tsang.
\newblock On the mean square formula of the error term in the {D}irichlet
  divisor problem.
\newblock {\em Math. Proc. Cambridge Philos. Soc.}, 146(2):277--287, 2009.

\bibitem{MR0930552}
Emmanuel Preissmann.
\newblock Sur la moyenne quadratique du terme de reste du probl\`eme du cercle.
\newblock {\em C. R. Acad. Sci. Paris S\'{e}r. I Math.}, 306(4):151--154, 1988.

\bibitem{MR2212877}
O.~Robert and P.~Sargos.
\newblock Three-dimensional exponential sums with monomials.
\newblock {\em J. Reine Angew. Math.}, 591:1--20, 2006.

\bibitem{MR0098718}
Kwang-Chang Tong.
\newblock On divisor problems. {II}, {III}.
\newblock {\em Acta Math. Sinica}, 6:139--152, 515--541, 1956.

\bibitem{MR1162488}
Kai~Man Tsang.
\newblock Higher-power moments of {$\Delta(x),\;E(t)$} and {$P(x)$}.
\newblock {\em Proc. London Math. Soc. (3)}, 65(1):65--84, 1992.

\bibitem{MR1512430}
J.~G. van~der Corput.
\newblock Zum {T}eilerproblem.
\newblock {\em Math. Ann.}, 98(1):697--716, 1928.

\bibitem{MR1509041}
Georges Vorono\"{\i}.
\newblock Sur une fonction transcendante et ses applications \`a la sommation
  de quelques s\'{e}ries.
\newblock {\em Ann. Sci. \'{E}cole Norm. Sup. (3)}, 21:207--267, 1904.

\bibitem{MR2067871}
Wenguang Zhai.
\newblock On higher-power moments of {$\Delta(x)$}. {II}.
\newblock {\em Acta Arith.}, 114(1):35--54, 2004.

\bibitem{MR2168766}
Wenguang Zhai.
\newblock On higher-power moments of {$\Delta(x)$}. {III}.
\newblock {\em Acta Arith.}, 118(3):263--281, 2005.

\end{thebibliography}

\end{document}